\documentclass[11pt]{article}

\usepackage{amsmath}
\usepackage{amsthm}
\usepackage{amssymb}
\usepackage{bm}
\usepackage{mathtools}
\usepackage{mdframed}
\usepackage{microtype}
\usepackage{thmtools}
\usepackage{float}
\usepackage[obeyFinal,textsize=scriptsize,shadow]{todonotes}
\usepackage[Symbolsmallscale]{upgreek}
\usepackage{xcolor}
\usepackage{tikz}

\usepackage{custom}

\usepackage{comment,url,algorithm,algorithmic,graphicx,subcaption,relsize}
\usepackage{amssymb,amsfonts,amsmath,amsthm,amscd,dsfont,mathrsfs,mathtools,microtype,nicefrac,pifont}
\usepackage{float,psfrag,epsfig,color,url,hyperref}
\usepackage{upgreek}
\usepackage{epstopdf,bbm,mathtools,enumitem}
\usepackage[toc,page]{appendix}
\usepackage[mathscr]{euscript}
\usepackage[giveninits=true, style=alphabetic, maxbibnames=999, maxalphanames=999]{biblatex}

\usepackage[top=1in, bottom=1in, left=1in, right=1in]{geometry}




\def\balign#1\ealign{\begin{align}#1\end{align}}
\def\baligns#1\ealigns{\begin{align*}#1\end{align*}}
\def\balignat#1\ealign{\begin{alignat}#1\end{alignat}}
\def\balignats#1\ealigns{\begin{alignat*}#1\end{alignat*}}
\def\bitemize#1\eitemize{\begin{itemize}#1\end{itemize}}
\def\benumerate#1\eenumerate{\begin{enumerate}#1\end{enumerate}}

\newenvironment{talign*}
 {\csname align*\endcsname}
 {\endalign}
\newenvironment{talign}
 {\csname align\endcsname}
 {\endalign}

\def\balignst#1\ealignst{\begin{talign*}#1\end{talign*}}
\def\balignt#1\ealignt{\begin{talign}#1\end{talign}}



\let\originalleft\left
\let\originalright\right
\renewcommand{\left}{\mathopen{}\mathclose\bgroup\originalleft}
\renewcommand{\right}{\aftergroup\egroup\originalright}


\def\tinycitep*#1{{\tiny\citep*{#1}}}
\def\tinycitealt*#1{{\tiny\citealt*{#1}}}
\def\tinycite*#1{{\tiny\cite*{#1}}}
\def\smallcitep*#1{{\scriptsize\citep*{#1}}}
\def\smallcitealt*#1{{\scriptsize\citealt*{#1}}}
\def\smallcite*#1{{\scriptsize\cite*{#1}}}

\def\<{\left\langle} 
\def\>{\right\rangle}










\def\bigO#1{\mathcal{O}\left(#1\right)} 





\DeclareSymbolFont{rsfs}{U}{rsfs}{m}{n}
\DeclareSymbolFontAlphabet{\mathscrsfs}{rsfs}

\providecommand{\tr}{\mathop\mathrm{tr}}



\ifdefined\nonewproofenvironments\else
\ifdefined\ispres\else
\newtheorem{theorem}{Theorem}

\renewenvironment{proof}{\noindent\textbf{Proof.}\hspace*{.3em}}{\qed \vspace{.1in}}
\newenvironment{proof-sketch}{\noindent\textbf{Proof Sketch}
  \hspace*{1em}}{\qed\bigskip\\}
\newenvironment{proof-idea}{\noindent\textbf{Proof Idea}
  \hspace*{1em}}{\qed\bigskip\\}
\newenvironment{proof-of-lemma}[1][{}]{\noindent\textbf{Proof of Lemma {#1}}
  \hspace*{1em}}{\qed\\}
\newenvironment{proof-of-theorem}[1][{}]{\noindent\textbf{Proof of Theorem {#1}}
  \hspace*{1em}}{\qed\\}
\newenvironment{proof-attempt}{\noindent\textbf{Proof Attempt}
  \hspace*{1em}}{\qed\bigskip\\}

\newenvironment{remark}{\noindent\textbf{Remark.}
  \hspace*{0em}}{\smallskip}

\fi

\newtheorem{proposition}[theorem]{Proposition}

\theoremstyle{definition}

\fi
\numberwithin{equation}{section}
\makeatletter
\@addtoreset{equation}{section}
\makeatother

\hypersetup{
  colorlinks,
  linkcolor={red!50!black},
  citecolor={blue!50!black},
  urlcolor={blue!80!black}
}







\newcommand{\BR}{\mathbb R}

\addbibresource{ref.bib}

\usepackage{crossreftools}

\usepackage{tcolorbox}
\usepackage[normalem]{ulem}

\makeatletter
\renewcommand{\paragraph}{%
  \@startsection{paragraph}{4}%
  {\z@}{1.25ex \@plus 1ex \@minus .2ex}{-1em}%
  {\normalfont\normalsize\bfseries}%
}

\makeatother

\mathtoolsset{showonlyrefs}

\begin{document}

\title{Lower bounds for trace estimation via Block Krylov and other methods}

 \author{
  Shi Jie Yu\thanks{
  Department of Computer Science and Engineering at New York University, \texttt{sy4468@nyu.edu}.
 }
}

\pagenumbering{gobble}
\maketitle

\begin{abstract}
This paper studies theoretical lower bounds for estimating the trace of a matrix function, $\text{tr}(f(A))$, focusing on methods that use Hutchinson's method along with Block Krylov techniques. These methods work by approximating matrix-vector products like $f(A)V$ using a Block Krylov subspace. This is closely related to approximating functions with polynomials. We derive theoretical upper bounds on how many Krylov steps are needed for functions such as $A^{-1/2}$ and $A^{-1}$ by analyzing the upper bounds from the polynomial approximation of their scalar equivalent. In addition, we also develop lower limits on the number of queries needed for trace estimation, specifically for $\text{tr}(W^{-p})$ where $W$ is a Wishart matrix. Our study clarifies the connection between the number of steps in Block Krylov methods and the degree of the polynomial used for approximation. This links the total cost of trace estimation to basic limits in polynomial approximation and how much information is needed for the computation.
\end{abstract}

\newpage

\tableofcontents

\clearpage
\pagenumbering{arabic}

\section{Introduction}
Estimating the trace of matrix functions is a significant and widely applied task across diverse scientific and engineering fields, including statistical analysis, machine learning, and computational physics. Given its importance, understanding the fundamental limits of trace estimation is essential. Theoretical lower bounds play a critical role in this regard, as they define the best possible performance any algorithm can achieve. Our study investigates these performance limits. We utilize Block Krylov methods to analyze the capabilities of existing algorithms and employ Wishart matrices to construct and evaluate challenging instances for trace estimation problems. Establishing such theoretical lower bounds is inherently difficult because it requires demonstrating that no algorithm within a given computational model can surpass these identified limits. Nevertheless, deriving theoretical upper bounds can reveal the underlying complexities of the problem and provide a starting point for formulating tighter and more informative lower bounds.

\subsection{Our contributions}
This paper makes several contributions to understanding the theoretical bounds of trace estimation using Block Krylov and other methods:
\begin{itemize}
    \item \textbf{Formalizing Upper Bounds via Polynomial Approximation:} We provide a simple formalization on how the number of iterations in a Block Krylov method for approximating $f(A)V$ corresponds to the degree of a polynomial approximating the scalar function $f(t)$. Based on this formalization, we derive upper bounds on the computational complexity for the Hutchinson-Krylov trace estimation method.
    \item \textbf{Specific Upper Bounds for Key Functions:} We provide concrete upper bounds for estimating $\mathrm{tr}(A^{-1})$ as an extension to a previously established upper bound for estimating  $\mathrm{tr}(A^{-1/2})$.
    \item \textbf{Establishing Query Lower Bounds:} We present and extend query lower bounds for trace estimation, particularly for $\mathrm{tr}(W^{-p})$ where $W$ is a Wishart matrix. We show that any algorithm achieving a constant factor approximation with high probability requires at least $\Omega(d)$ matrix-vector product queries, where $d$ is the dimension of the matrix. This is achieved by leveraging properties of Wishart matrix posteriors under querying.
\end{itemize}

\subsection{Related work}\label{scn:related}
\subsubsection{Stochastic Trace Estimation}
A cornerstone in stochastic trace estimation is Hutchinson's method \cite{hutchinson1989stochastic}, which provides an unbiased estimator for the trace of a matrix, $\mathrm{tr}(M)$. This method utilizes quadratic forms $z^T Mz$ with random vectors $z$ and has been widely adopted and analyzed, as seen in works like \cite{avron2011randomized}. The practical application of Hutchinson's method to estimate $\mathrm{tr}(f(A))$ often involves the computation of $f(A)z_s$ for multiple random vectors $z_s$. Recent developments in this area have focused on enhancing these stochastic approaches, including randomized and sketching-based methods, sometimes combining them with Krylov techniques \cite{meyer2021hutch++}.

\subsubsection{Krylov Subspace Methods}
For large matrices $A$, the direct computation of $f(A)z_s$ required for trace estimation is often prohibitively expensive. Krylov subspace methods \cite{liesen2013krylov} offer an efficient alternative by approximating $f(A)z_s$ without needing to explicitly form the matrix $f(A)$. When approximating $f(A)V$ using $m$ steps of a Block Krylov method, the resulting iterate $F_m$ resides within the Block Krylov subspace $\mathcal{K}_m(A,V)$. This principle of leveraging Krylov subspaces to handle matrix functions is exploited in numerous algorithms.

\subsubsection{Polynomial Approximation}
The efficacy of Krylov subspace methods for approximating $f(A)V$ is deeply connected to polynomial approximation \cite{greenbaum1997iterative,frommer2008matrix}. The iterate $F_m$ obtained from a Block Krylov method can be expressed as $Q_{m-1}(A)V$, where $Q_{m-1}$ is a polynomial of degree $m-1$ that approximates the function $f$. The efficiency of such polynomial approximations has been studied extensively, especially for functions like $x^{-1/2}$ and $x^{-1}$ which are crucial for applications involving graph Laplacians and covariance matrices \cite{spielman2011spectral}. Chebyshev polynomials are often employed as they can provide near-optimal approximations on the interval $[-1,1]$, with these results being extendable to general intervals that contain the spectrum of $A$ \cite{rivlin2020chebyshev}. Key results concerning the degree of polynomials required to achieve a specific approximation accuracy are provided by \cite{sachdeva2014faster}, as referenced by Proposition \ref{prop:sacdeva} in this paper. These results directly inform the number of Krylov iterations necessary for an effective approximation.

\subsubsection{Lower Bounds and Information-Based Complexity}
Lower bounds in numerical computations establish fundamental limits on the performance of any algorithm. The field of information-based complexity investigates the minimum resources, such as matrix-vector products or queries to the matrix, required to solve a problem to a given accuracy \cite{traub2003information}. Such lower bounds are vital for assessing the optimality of algorithms. For instance, the work by Braverman et al. \cite{braverman2020gradient} on regression problems introduces techniques for deriving lower bounds using Wishart matrices, demonstrating how information about a matrix $W$ is progressively revealed through queries of the form $Wv_i$. Their Lemma 3.4, presented here as Proposition \ref{prop:wishart_posterior}, is crucial for understanding the structure of the posterior distribution of a Wishart matrix after several queries. The underlying properties of Wishart matrices, including their eigenvalue distributions as documented in works like \cite{edelman1988eigenvalues,vershynin2018high,szarek1991condition}, form the basis for these hardness results.

The interplay between the number of probe vectors $N_v$ in Hutchinson's method, the number of Krylov steps $D$, and the properties of the matrix $A$ and function $f$ continues to be an active area of research in achieving optimal trace estimators. Our work contributes to this by providing a focused analysis on the optimality of Block Krylov methods through the lens of polynomial approximation and query complexity.

\section{Preliminaries}

\subsection{Block Krylov as a polynomial approximation problem}
Block Krylov subspace methods are iterative algorithms \cite{saad2019iterative}  whose foundations are deeply intertwined with polynomial approximation theory. A Block Krylov subspace of order $m$, generated by a square matrix $A$ and a block of vectors $V$ (which itself is a matrix), is defined as:
$$ \mathcal{K}_m(A,V) = \mathrm{span}\{V, AV, A^2V, \ldots, A^{m-1}V\} $$
Any element $Y$ that belongs to this subspace $\mathcal{K}_m(A,V)$ can be written as a matrix polynomial in $A$ acting on $V$. That is, $Y=Q_{m-1}(A)V$, where $Q_{m-1}(t)$ is a polynomial of the form $c_0 + c_1 t + \ldots + c_{m-1}t^{m-1}$, having a degree of at most $m-1$.

When a Block Krylov method is run for $m$ iterations to approximate $f(A)V$, the resulting approximation, denoted $F_m$, is constructed within the Krylov subspace $\mathcal{K}_m(A,V)$. Consequently, $F_m$ can be expressed as $Q_{m-1}(A)V$ for some polynomial $Q_{m-1}$ of degree at most $m-1$. This polynomial $Q_{m-1}(t)$ is chosen, either implicitly or explicitly by the method, to serve as an approximation to the scalar function $f(t)$ on the spectrum of $A$. The degree of this ``approximating polynomial'' $Q_{m-1}(t)$ is therefore $m-1$.

In summary, the number of iterations $m$ in a Block Krylov method dictates the dimension of the Krylov subspace generated. This dimension, in turn, determines the maximum degree of the polynomials ($m-1$ for $Q_{m-1}$ or $m$ for $P_m$) that can be used to construct the approximation or manage the residual.

This link between iteration count $m$ and polynomial degree ($m-1$ for $Q_{m-1}$) is fundamental to establishing that the theoretical bounds of Block Krylov methods are equivalent to those of polynomial approximation methods. If polynomial approximation theory requires a degree $D$ polynomial to approximate a scalar function $f(x)$ to a certain precision (often determined by parameters like $\kappa$ and $\delta$), then a Block Krylov method for $f(A)V$ needs approximately $D+1$ iterations (i.e., $m \approx D+1$). Thus, the theoretical bound on the number of Krylov iterations $m$ directly scales with, and is of the same order as, the required polynomial degree $D$. For instance, if $D = \mathcal{O}(\sqrt{\kappa} \log(\kappa/\delta))$, the iteration count $m$ is also $\mathcal{O}(\sqrt{\kappa} \log(\kappa/\delta))$.

\subsection{Trace Estimation using Hutchinson's method and Block Krylov}

Estimating the trace of a matrix function, $\mathrm{tr}(f(A))$, for a large matrix $A$ and a general function $f$, is a significant problem in numerical linear algebra and scientific computing that is notoriously computationally expensive, especially as n grows, with direct methods often incurring $\mathcal{O}(n^3)$ costs. A prominent approach combines Hutchinson's stochastic trace estimator with polynomial approximations of $f(A)v$ typically computed via Block Krylov subspace methods.

\subsubsection{The Hutchinson-Krylov method}

Hutchinson's method approximates the trace of a matrix $M$ using the formula:
$$ \mathrm{tr}(M) = \mathbb{E}_{\mathbf{z}}[\mathbf{z}^T M \mathbf{z}] $$
where $\mathbf{z}$ is a random vector whose entries are typically drawn from a distribution with zero mean and unit variance (e.g., Rademacher or standard Gaussian). In practice, this expectation is estimated by averaging over $N_v$ random probe vectors $\{\mathbf{z}_s\}_{s=1}^{N_v}$:
$$ \mathrm{tr}(f(A)) \approx \frac{1}{N_v} \sum_{s=1}^{N_v} \mathbf{z}_s^T f(A) \mathbf{z}_s $$
For a general function $f$, the term $f(A)\mathbf{z}_s$ is not computed exactly. Instead, it is approximated by $Q_D(A)\mathbf{z}_s$, where $Q_D(t)$ is a polynomial of degree $D$ that approximates the scalar function $f(t)$ over an interval containing the spectrum of $A$. The matrix-vector product $Q_D(A)\mathbf{z}_s$ is efficiently computed using $D$ steps of a Krylov subspace method (such as Lanczos \cite{golub1977block} for symmetric $A$ or Arnoldi \cite{sadkane1993block} for non-symmetric $A$) initiated with $A$ and $\mathbf{z}_s$. This process effectively yields an approximation within the Krylov subspace $\mathcal{K}_D(A, \mathbf{z}_s)$.
Thus, the practical trace estimator becomes:
$$ \tilde{\mathrm{tr}}(f(A)) = \frac{1}{N_v} \sum_{s=1}^{N_v} \mathbf{z}_s^T Q_D(A) \mathbf{z}_s $$

\subsubsection{Reduction of trace estimation complexity to polynomial approximation bounds}\label{sec:trace_to_poly_reduction}

We can establish a direct relationship between the complexity of the underlying polynomial approximation and the overall computational complexity of the Hutchinson-Krylov trace estimation method.

Let us assume that, from polynomial approximation theory, the minimum degree $D$ of a polynomial $Q_D(t)$ required to approximate the scalar function $f(t)$ to a desired accuracy $\epsilon_{\text{poly}}$ (over a relevant domain related to $A$'s spectrum) is given by:
$$ D = \bigO(g(x)) $$
Here, $g(x)$ is a function representing this complexity, where $x$ encapsulates problem parameters such as the condition number $\kappa$ of $A$, the bounds of the approximation interval, and parameters related to the target accuracy $\epsilon_{\text{poly}}$

The computation of each term $Q_D(A)\mathbf{z}_s$ using $D$ steps of a Krylov method (or $D$ matrix-vector products if $Q_D(A)\mathbf{z}_s$ is formed by explicitly applying the polynomial) has a computational cost dominated by these matrix-vector products (MVPs). Thus, the cost for processing a single probe vector $\mathbf{z}_s$ is:
$$ \text{Cost per probe vector} = \bigO(D) \text{ MVPs} $$
Substituting the complexity for the polynomial degree $D$, we get:
$$ \text{Cost per probe vector} = \bigO(g(x)) \text{ MVPs} $$
Since Hutchinson's method requires averaging over $N_v$ such probe vectors, the total computational cost for estimating $\mathrm{tr}(f(A))$ is the product of the number of probe vectors and the cost per vector:
$$ \text{Total Cost} = N_v \times (\text{Cost per probe vector}) = N_v \times \bigO(g(x)) $$
Therefore, the overall complexity for the trace estimation is:
$$ \text{Total Cost} = \bigO(N_v \cdot g(x)) \text{ MVPs} $$
In conclusion, if the theoretical bound on the degree of the polynomial approximation required for $f(t)$ is $\bigO(g(x))$, then the computational complexity (in terms of dominant operations like MVPs) for estimating $\mathrm{tr}(f(A))$ using the Hutchinson-Krylov method scales as $\bigO(N_v \cdot g(x))$.

\section{Upper bound via Block Krylov}\label{sec:upperbk}
\begin{remark}
Generally, it is much harder to establish a theoretical lower bound for Block Krylov methods, or any stochastic methods for that matter, as compared to establishing an upper bound. Sometimes, establishing upper bounds can give us some insights on how the lower bounds may potentially manifest.
\end{remark}

\noindent
From Section~\ref{sec:trace_to_poly_reduction}, we can establish an upper bound on the computational complexity of estimating $\mathrm{tr}(f(A))$ using the Hutchinson-Krylov method by proxy, by first determining the upper bound for the associated polynomial approximation of the scalar function $f(x)$. As outlined, the Hutchinson-Krylov approach approximates $\mathrm{tr}(f(A))$ via terms of the form $\mathbf{z}_s^T Q_D(A) \mathbf{z}_s$, where $Q_D(A)\mathbf{z}_s$ is computed using $D$ steps of a Block Krylov method. The number of steps $D$ corresponds to the degree of the polynomial $Q_D(t)$ that approximates $f(t)$. If polynomial approximation theory guarantees that $f(x)$ can be approximated to the desired accuracy with a polynomial of degree $D = \mathcal{O}(g(x))$, then the total computational cost for the trace estimation, dominated by matrix-vector products, becomes $\mathcal{O}(N_v \cdot D) = \mathcal{O}(N_v \cdot g(x))$. Therefore, to establish the overall upper bound for the trace estimation via this Block Krylov-based framework, it suffices to prove the upper bound on the degree $D$ required for the polynomial approximation of $f(x)$.

\subsection{\texorpdfstring{Upper bound for $\tr(A^{-1/2})$: a technical review}{Upper bound for tr(A-1/2): a technical review}}
\begin{proposition}[{\cite[Theorem 3.3]{sachdeva2014faster}}] \label{prop:sacdeva}
    For any positive integer $s$ and $0 < \delta < 1,$ there exists a polynomial $p_{s, \delta}$ of degree $\lceil \sqrt{2s \ln (2/\delta)} \rceil$ such that $|p_{s, \delta}(x) - x^s| \le \delta$ for all $x \in [-1, 1]$.
\end{proposition}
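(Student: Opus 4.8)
\medskip
\noindent\textbf{Proof strategy.}
The plan is to take $p_{s,\delta}$ to be a truncation of the exact Chebyshev expansion of $x^s$ on $[-1,1]$, and to control the truncation error by recognizing the Chebyshev coefficients as the distribution of a sum of Rademacher signs.

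First I would write $x=\cos\theta$ and apply the binomial theorem to $(\cos\theta)^s=\bigl(\tfrac{e^{i\theta}+e^{-i\theta}}{2}\bigr)^s$. Pairing the terms $e^{im\theta}$ and $e^{-im\theta}$ and using $\cos(j\theta)=T_j(\cos\theta)$ gives an identity $x^s=\sum_{j\ge 0}c_j T_j(x)$ in which only indices $j$ with $j\equiv s\pmod 2$, $0\le j\le s$, occur. The cleanest way to package the coefficients: if $S=\sum_{i=1}^s\epsilon_i$ is a sum of $s$ i.i.d. uniform signs $\epsilon_i\in\{-1,+1\}$, then $\mathbb{E}[e^{iS\theta}]=(\cos\theta)^s$, so taking real parts yields
\[
x^s \;=\; \mathbb{E}\bigl[T_{|S|}(x)\bigr] \;=\; \sum_{j\ge 0}\mathbb{P}[|S|=j]\,T_j(x),
\]
i.e. $c_j=\mathbb{P}[|S|=j]\ge 0$ and $\sum_j c_j=1$.

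Second, set $k:=\lceil\sqrt{2s\ln(2/\delta)}\rceil$ and define the candidate polynomial $p_{s,\delta}(x):=\sum_{0\le j\le k}\mathbb{P}[|S|=j]\,T_j(x)$, which has degree at most $k$. Using $\|T_j\|_\infty=1$ on $[-1,1]$ together with the expansion above,
\[
\bigl|\,p_{s,\delta}(x)-x^s\,\bigr| \;=\; \Bigl|\,\mathbb{E}\bigl[T_{|S|}(x)\,\mathbf{1}\{|S|>k\}\bigr]\Bigr| \;\le\; \mathbb{P}[|S|>k].
\]
Third, I would bound $\mathbb{P}[|S|>k]$ by Hoeffding's inequality for the bounded-increment sum $S$, giving $\mathbb{P}[|S|>k]\le 2\exp(-k^2/(2s))$; since $k^2\ge 2s\ln(2/\delta)$ by the choice of $k$, this is at most $2\cdot(\delta/2)=\delta$, which is exactly the claimed accuracy.

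Every step is elementary, so the only real obstacle is bookkeeping: one must track constants carefully to land on $\ln(2/\delta)$ (rather than, say, $\ln(4/\delta)$) inside the square root. Concretely this means invoking the two-sided Hoeffding bound $2e^{-t^2/(2s)}$ — equivalently, bounding $\sum_{j>k}|c_j|=2\,\mathbb{P}[S>k]$ by the one-sided bound $e^{-t^2/(2s)}$ — and then reading off $k=\lceil\sqrt{2s\ln(2/\delta)}\rceil$ as precisely the threshold that makes the tail bound equal $\delta$. A small remark worth including is the parity observation: only $j\equiv s\pmod 2$ contribute, so in fact $p_{s,\delta}$ may have degree strictly below $k$, but this only helps.
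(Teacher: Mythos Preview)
Your proposal is correct. The paper itself does not prove this proposition: it is quoted verbatim as \cite[Theorem~3.3]{sachdeva2014faster} and used as a black box in the subsequent arguments. Your argument---expand $x^s=(\cos\theta)^s=\mathbb{E}[e^{iS\theta}]$ for a Rademacher sum $S$, read off the Chebyshev expansion $x^s=\sum_j \Pr[|S|=j]\,T_j(x)$, truncate at degree $k$, and bound the tail $\Pr[|S|>k]$ by Hoeffding---is exactly the proof given in the cited reference, with the constants matching to give $k=\lceil\sqrt{2s\ln(2/\delta)}\rceil$. So there is nothing to compare against within the present paper, and your write-up recovers the original source's proof faithfully.
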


\begin{proposition}[polynomial approximation of inverse square root] \label{prop:inverse_square_root_polyapprox}
    For any $\kappa \ge 2$ and $0 < \delta < \frac{1}{2}$, there exists a polynomial $q_{\kappa, \delta}$ of degree $O(\sqrt{\kappa} \log \frac{\kappa}{\delta})$ such that $|q_{\kappa, \delta}(x) - x^{-1/2}| \le \delta/\sqrt\kappa$ for all $1 \le x \le \kappa$.
\end{proposition}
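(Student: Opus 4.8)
The plan is to reduce the problem to a \emph{scalar, fixed-interval} approximation: approximate the single function $(1-y)^{-1/2}$ on an interval bounded away from $1$, and then substitute. Concretely, for $x\in[1,\kappa]$ put $y=1-x/\kappa$, so that $y\in[0,\,1-1/\kappa]\subseteq[-1,1]$ and $x^{-1/2}=\kappa^{-1/2}(1-y)^{-1/2}$. Thus any polynomial $q(y)$ with $|q(y)-(1-y)^{-1/2}|\le\delta$ on $[0,1-1/\kappa]$ yields, after the substitution $y=1-x/\kappa$ and multiplication by $\kappa^{-1/2}$, a polynomial in $x$ of the same degree with $|q_{\kappa,\delta}(x)-x^{-1/2}|\le\delta/\sqrt\kappa$ on $[1,\kappa]$, exactly as required. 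So it suffices to construct a low-degree $q$.

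To build $q$, I would start from the generalized binomial series $(1-y)^{-1/2}=\sum_{k\ge 0}\binom{2k}{k}4^{-k}y^{k}$, valid for $|y|<1$, and the elementary bound $\binom{2k}{k}4^{-k}\le 1$. Truncating at degree $\ell$ gives $T_\ell(y)=\sum_{k=0}^{\ell}\binom{2k}{k}4^{-k}y^{k}$ with tail $\sum_{k>\ell}\binom{2k}{k}4^{-k}y^{k}\le\sum_{k>\ell}y^{k}=y^{\ell+1}/(1-y)\le\kappa(1-1/\kappa)^{\ell+1}\le\kappa e^{-(\ell+1)/\kappa}$ for $y\in[0,1-1/\kappa]$. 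Choosing $\ell+1=\lceil\kappa\ln(2\kappa/\delta)\rceil$, i.e. $\ell=O(\kappa\log(\kappa/\delta))$, makes this truncation error at most $\delta/2$. Next, with the accuracy parameter $\delta'$ chosen to equal $\delta/\big(2(\ell+1)\big)$, apply Proposition~\ref{prop:sacdeva} to each monomial $y^{k}$ ($1\le k\le\ell$): it supplies $p_{k,\delta'}$ of degree $\lceil\sqrt{2k\ln(2/\delta')}\rceil$ with $|p_{k,\delta'}(y)-y^{k}|\le\delta'$ on $[-1,1]$. Define $q(y)=1+\sum_{k=1}^{\ell}\binom{2k}{k}4^{-k}\,p_{k,\delta'}(y)$. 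Since the degree $\lceil\sqrt{2k\ln(2/\delta')}\rceil$ is nondecreasing in $k$, we get $\deg q\le\lceil\sqrt{2\ell\ln(2/\delta')}\rceil$, and by the triangle inequality together with $\binom{2k}{k}4^{-k}\le 1$ we get $|T_\ell(y)-q(y)|\le\ell\delta'\le\delta/2$; combined with the truncation estimate, $|(1-y)^{-1/2}-q(y)|\le\delta$ on $[0,1-1/\kappa]$.

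The remaining point is to verify $\deg q=O(\sqrt\kappa\log(\kappa/\delta))$. With $\ell=O(\kappa\log(\kappa/\delta))$ and $\delta'=\delta/\big(2(\ell+1)\big)$ one has $\ln(2/\delta')=\ln\big(4(\ell+1)/\delta\big)=O(\log(\kappa/\delta))$, because the extra $\ln(\ell+1)=O(\log\log(\kappa/\delta))$ term is dominated. Hence $\deg q\le\lceil\sqrt{2\ell\ln(2/\delta')}\rceil=O\big(\sqrt{\kappa\log(\kappa/\delta)\cdot\log(\kappa/\delta)}\big)=O(\sqrt\kappa\log(\kappa/\delta))$. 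The hypotheses $\kappa\ge 2$ and $0<\delta<1/2$ are used only to ensure $1-1/\kappa\ge 1/2>0$ (so the interval lies inside the disk of convergence of the binomial series and the geometric tail bound applies) and that all logarithms above are positive, and that $\delta'<1$ so Proposition~\ref{prop:sacdeva} is applicable with this $\delta'$.

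I expect the only delicate part to be this last degree bookkeeping: one must confirm that forcing the per-monomial accuracy $\delta'$ to shrink like $1/\ell$ — needed so the $\ell$ accumulated errors still sum to $\delta/2$ — inflates $\ln(2/\delta')$ only additively by $O(\log\log(\kappa/\delta))$, which is absorbed into $O(\log(\kappa/\delta))$, leaving the product under the square root at $O(\kappa\log^2(\kappa/\delta))$ and hence the degree at $O(\sqrt\kappa\log(\kappa/\delta))$. Everything else — the binomial expansion, the central-binomial bound $\binom{2k}{k}\le 4^{k}$, the geometric-tail estimate, and the affine change of variables — is routine.
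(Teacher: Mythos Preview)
Your proposal is correct and follows essentially the same approach as the paper's proof: expand $(1\pm y)^{-1/2}$ as a binomial series with coefficients bounded by $1$, truncate at $\ell=O(\kappa\log(\kappa/\delta))$, replace each monomial by its Sachdeva--Vishnoi approximant, and undo the affine substitution $y=1-x/\kappa$ (the paper uses the equivalent $y=x/\kappa-1$ with $(1+y)^{-1/2}$, and chooses per-term accuracy $\delta/(4t^2)$ rather than your uniform $\delta/(2(\ell+1))$, but these are cosmetic). One small slip: you write $\ln(\ell+1)=O(\log\log(\kappa/\delta))$, whereas in fact $\ln(\ell+1)=O(\log\kappa+\log\log(\kappa/\delta))$; this is still $O(\log(\kappa/\delta))$, so your final degree bound is unaffected.
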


\begin{proof}
    We begin by considering the Taylor series of $f(x) = (1+x)^{-1/2}$ around $x=0$:
    \[(1+x)^{-1/2} = \sum_{t=0}^\infty \binom{-1/2}{t} x^t = 1 + \sum_{t=1}^\infty c_t x^t\,,\]
    where $c_t = \binom{-1/2}{t}$ and $|c_t| \le 1$ for all $t \ge 1$.
    
    Truncating this series at $T$ terms, $P_T(x) = \sum_{t=0}^{T} c_t x^t$, the remainder $R_T(x) = (1+x)^{-1/2} - P_T(x)$ for $|x| \le 1 - 1/\kappa < 1$ satisfies $|R_T(x)| \le \kappa (1 - 1/\kappa)^{T+1}$. Choosing $T = O(\kappa \log \frac{\kappa}{\delta})$ ensures $|(1+x)^{-1/2} - \sum_{t=0}^{T} c_t x^t| \le \frac{\delta}{2}$ for $|x| \le 1 - \frac{1}{\kappa}$.

    Next, by Proposition~\ref{prop:sacdeva}, for each $t \in \{1, \dots, T\}$, there exists a polynomial $p_{t, \delta}(x)$ of degree $O(\sqrt{t \log (t/\delta)})$ such that $|p_{t, \delta}(x) - x^t| \le \frac{\delta}{4t^2}$ for all $x \in [-1, 1]$. Let $p_{0, \delta}(x) = 1$. Consider the polynomial $\hat{p}(x) = \sum_{t=0}^{T} c_t p_{t, \delta}(x)$. For $|x| \le 1 - \frac{1}{\kappa}$, the approximation error is:
    \begin{align*}
        \left|(1+x)^{-1/2} - \hat{p}(x)\right| &\le \left|(1+x)^{-1/2} - \sum_{t=0}^{T} c_t x^t\right| + \left|\sum_{t=0}^{T} c_t (x^t - p_{t, \delta}(x))\right| \\
        &\le \frac{\delta}{2} + \sum_{t=1}^{T} |c_t| |x^t - p_{t, \delta}(x)| \le \frac{\delta}{2} + \sum_{t=1}^{T} \frac{\delta}{4t^2} < \delta\,.
    \end{align*}
    The degree of $\hat{p}(x)$ is $O(\sqrt{T \log (T/\delta)}) = O(\sqrt{\kappa} \log \frac{\kappa}{\delta})$.

    Now, consider $y = x/\kappa - 1$. For $x \in [1, \kappa]$, $y \in [-1 + 1/\kappa, 0]$, which is within $[-1 + 1/\kappa, 1 - 1/\kappa]$. We have $|\hat{p}(y) - (1+y)^{-1/2}| \le \delta$. Substituting $y = x/\kappa - 1$:
    \[\left|\hat{p}\left(\frac{x}{\kappa} - 1\right) - \left(1 + \frac{x}{\kappa} - 1\right)^{-1/2}\right| \le \delta \implies \left|\hat{p}\left(\frac{x}{\kappa} - 1\right) - \sqrt{\frac{\kappa}{x}}\right| \le \delta\,.\]
    Multiplying by $1/\sqrt{\kappa}$, we get \[\left|\frac{1}{\sqrt{\kappa}} \hat{p}\left(\frac{x}{\kappa} - 1\right) - \frac{1}{\sqrt{x}}\right| \le \frac{\delta}{\sqrt{\kappa}}\,.\] Thus, the polynomial $q_{\kappa, \delta}(x) = \frac{1}{\sqrt{\kappa}} \hat{p}\left(\frac{x}{\kappa} - 1\right)$ has degree $O(\sqrt{\kappa} \log \frac{\kappa}{\delta})$ and satisfies $|q_{\kappa, \delta}(x) - x^{-1/2}| \le \delta/\sqrt\kappa$ for all $1 \le x \le \kappa$.

\end{proof}

\subsection{\texorpdfstring{Upper bound for $\tr(A^{-1})$}{Upper bound for tr(A-1)}}
\begin{theorem}[Polynomial approximation of $X^{-1}$]
For any $\kappa \ge 2$ and $0 < \delta < 1/2$, there exists a polynomial $p_{\kappa, \delta}(x)$ of degree $O(\sqrt{\kappa} \log \frac{\kappa}{\delta})$ such that $|p_{\kappa, \delta}(x) - x^{-1}| \le \frac{\delta}{\kappa}$ for all $1 \le x \le \kappa$.
\end{theorem}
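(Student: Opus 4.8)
The plan is to mirror the proof of Proposition~\ref{prop:inverse_square_root_polyapprox} almost verbatim, replacing the Taylor expansion of $(1+x)^{-1/2}$ by that of $(1+x)^{-1}$. First I would expand $(1+x)^{-1} = \sum_{t=0}^{\infty} (-1)^t x^t$, whose coefficients satisfy $|c_t| = 1$, and truncate at $T$ terms to get $P_T(x) = \sum_{t=0}^{T} (-1)^t x^t$. On the shrunken interval $|x| \le 1 - 1/\kappa$ the remainder is the geometric tail $R_T(x) = \frac{(-x)^{T+1}}{1+x}$, and since $1 + x \ge 1/\kappa$ there, we obtain $|R_T(x)| \le \kappa (1-1/\kappa)^{T+1}$ --- exactly the same bound used in the inverse-square-root argument. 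Hence choosing $T = O(\kappa \log(\kappa/\delta))$ suffices to push this below $\delta/2$ for all $|x| \le 1 - 1/\kappa$.

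Next I would invoke Proposition~\ref{prop:sacdeva} to replace each monomial $x^t$, $t = 1, \dots, T$, by a polynomial $p_{t,\delta}$ of degree $O(\sqrt{t \log(t/\delta)})$ with $|p_{t,\delta}(x) - x^t| \le \delta/(4t^2)$ on $[-1,1]$, set $p_{0,\delta} \equiv 1$ and $\hat p(x) = \sum_{t=0}^{T} c_t p_{t,\delta}(x)$, and bound, by the triangle inequality exactly as in the referenced proof,
\[
\left|\hat p(x) - (1+x)^{-1}\right| \le \frac{\delta}{2} + \sum_{t=1}^{T} |c_t|\,|x^t - p_{t,\delta}(x)| \le \frac{\delta}{2} + \sum_{t=1}^{T} \frac{\delta}{4t^2} < \delta
\]
for $|x| \le 1 - 1/\kappa$. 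The degree of $\hat p$ is $O(\sqrt{T \log(T/\delta)}) = O(\sqrt{\kappa}\log(\kappa/\delta))$, using $\log(T/\delta) = O(\log(\kappa/\delta))$.

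Finally I would perform the affine change of variables $y = x/\kappa - 1$: for $x \in [1,\kappa]$ we have $y \in [-1 + 1/\kappa, 0] \subset [-1 + 1/\kappa, 1 - 1/\kappa]$, and $(1+y)^{-1} = \kappa/x$, so $|\hat p(x/\kappa - 1) - \kappa/x| \le \delta$. Dividing by $\kappa$ yields $\left|\frac{1}{\kappa}\hat p(x/\kappa - 1) - \frac{1}{x}\right| \le \frac{\delta}{\kappa}$, so $p_{\kappa, \delta}(x) = \frac{1}{\kappa}\hat p\!\left(\frac{x}{\kappa} - 1\right)$ is the desired polynomial, of degree $O(\sqrt{\kappa}\log(\kappa/\delta))$, achieving $|p_{\kappa,\delta}(x) - x^{-1}| \le \delta/\kappa$ on $[1,\kappa]$.

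I do not expect a genuine obstacle here; the only point that needs care is verifying that the Taylor remainder of $(1+x)^{-1}$ still obeys the $\kappa(1-1/\kappa)^{T+1}$ bound once the endpoint $x = -1 + 1/\kappa$ brings the denominator $1+x$ down to $1/\kappa$. This is precisely what keeps the cost for $x^{-1}$ at the same order $O(\sqrt{\kappa}\log(\kappa/\delta))$ as for $x^{-1/2}$: the $1/(1+x)$ blow-up is exactly compensated by the length of the shrunken interval. A secondary bookkeeping point is that the target accuracy is $\delta/\kappa$ rather than $\delta$, but this drops out automatically from the final division by $\kappa$, just as the $\delta/\sqrt{\kappa}$ in Proposition~\ref{prop:inverse_square_root_polyapprox} came from dividing by $\sqrt{\kappa}$.
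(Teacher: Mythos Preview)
Your proposal is correct and follows essentially the same route as the paper's own proof: geometric-series truncation of $(1+y)^{-1}$ on $|y|\le 1-1/\kappa$, Sachdeva--Vishnoi monomial compression, then the affine shift $y=x/\kappa-1$ and division by $\kappa$. The only cosmetic difference is that the paper assigns each monomial the uniform accuracy $\delta_t=\delta/(2T)$ whereas you use the $\delta/(4t^2)$ weights from the inverse-square-root proof; both choices yield the same $O(\sqrt{\kappa}\log(\kappa/\delta))$ degree.
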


\begin{proof}
The proof largely follows the structure of the proof for Proposition~\ref{prop:inverse_square_root_polyapprox}. We want to approximate $x^{-1}$. First, consider the function $(1+y)^{-1}$. For $|y| \le 1 - 1/\kappa < 1$, we can use the geometric series. Let $c_t = (-1)^t$, so $|c_t| = 1$ $\forall t \ge 0$:
    $$ (1+y)^{-1} = 1 - y + y^2 - y^3 + \dots = \sum_{t=0}^{\infty} (-1)^t y^t $$
\noindent
Let $S_T(y) = \sum_{t=0}^{T} (-1)^t y^t$ be the truncated sum. The error from truncation is $R_T(y) = (1+y)^{-1} - S_T(y) = \sum_{t=T+1}^{\infty} (-1)^t y^t$.
    For $|y| \le 1 - 1/\kappa$, the magnitude of the error is:
    $$ |R_T(y)| = \left|\frac{(-1)^{T+1} y^{T+1}}{1+y}\right| \le \frac{|y|^{T+1}}{1 - |y|} $$
    Since $|y| \le 1 - 1/\kappa$, $1 - |y| \ge 1/\kappa$. So,
    $$ |R_T(y)| \le \frac{|y|^{T+1}}{1/\kappa} = \kappa |y|^{T+1} \le \kappa \left(1 - \frac{1}{\kappa}\right)^{T+1} $$
We bound this error to a constrained value $\le \delta_P / 2$.
For large $\kappa$, we use the approximation:
\begin{align*}
    \left(1 - \frac{1}{\kappa}\right)^\kappa &\approx e^{-1}
\end{align*}

This implies:
\begin{align*}
    \left(1 - \frac{1}{\kappa}\right)^{T+1} &= \left(1 - \frac{1}{\kappa}\right) \left(1 - \frac{1}{\kappa}\right)^T \\
    &\approx \left(1 - \frac{1}{\kappa}\right) \left(e^{-1/\kappa}\right)^T \\
    &= \left(1 - \frac{1}{\kappa}\right) e^{-T/\kappa}
\end{align*}

Let $T = C\kappa \log(2\kappa/\delta_P)$ for some constant $C$.
We examine the term $\kappa (1 - 1/\kappa)^{T+1}$:
\begin{align*}
    \kappa \left(1 - \frac{1}{\kappa}\right)^{T+1} &\approx \kappa \left(1 - \frac{1}{\kappa}\right) e^{-T/\kappa} \\
    &\le \kappa \cdot e^{-T/\kappa} \\
    &= \kappa \cdot \exp\left(-\frac{C\kappa \log(2\kappa/\delta_P)}{\kappa}\right) \\
    &= \kappa \cdot \left(\exp\left(\log\left(\frac{2\kappa}{\delta_P}\right)\right)\right)^{-C} \\
    &= \kappa \cdot \left(\frac{2\kappa}{\delta_P}\right)^{-C} = \kappa \cdot \left(\frac{\delta_P}{2\kappa}\right)^C
\end{align*}

By choosing $C$ appropriately, we have:

\begin{align*}
    |R_T(y)| &= \left|(1+y)^{-1} - \sum_{t=0}^{T} (-1)^t y^t\right| \\
             &= \left|\frac{(-y)^{T+1}}{1+y}\right| \,, \qquad \text{where } T = O(\kappa \log(\kappa/\delta_P))
\end{align*}

Using Proposition 1 (Sachdeva \& Vishnoi, Theorem 3.3), for any positive integer $t$ and $\delta_t > 0$, there exists a polynomial $p_{t, \delta_t}(y)$ of degree $O(\sqrt{t \log(t/\delta_t)})$ such that $|p_{t, \delta_t}(y) - y^t| \le \delta_t$ for all $y \in [-1, 1]$. Let $p_{0, \delta_0}(y) = 1$.
    Let $P(y) = \sum_{t=0}^{T} (-1)^t p_{t, \delta_t}(y)$.
    The error is:
    \begin{align*} |(1+y)^{-1} - P(y)| &\le \left|(1+y)^{-1} - S_T(y)\right| + \left|S_T(y) - P(y)\right| \\ &\le \frac{\delta_P}{2} + \left|\sum_{t=0}^{T} (-1)^t (y^t - p_{t, \delta_t}(y))\right| \\ &\le \frac{\delta_P}{2} + \sum_{t=1}^{T} |(-1)^t| |y^t - p_{t, \delta_t}(y)| \\ &\le \frac{\delta_P}{2} + \sum_{t=1}^{T} \delta_t \end{align*}
We choose $\delta_t = \delta_P / (2T)$, then $|(1+y)^{-1} - P(y)| \le \delta_P / 2 + \delta_P / 2 = \delta_P$.

The degree of $P(y)$ is dominated by the degree of $p_{T, \delta_T}(y)$, which is $O(\sqrt{T \log(T/\delta_T)})$. Given $T = O(\kappa \log(\kappa/\delta_P))$ and $\delta_T = \delta_P/(2T)$, $\log(T/\delta_T) = \log(2T^2/\delta_P)$. The degree of $P(y)$ is $O(\sqrt{T \log T}) = O(\sqrt{\kappa \log(\kappa/\delta_P)} \cdot \log(\kappa \log(\kappa/\delta_P)) )$ = $O(\sqrt{\kappa} \log(\kappa/\delta_P))$.

We want to approximate $x^{-1}$ for $x \in [1, \kappa]$.
    Let $y = x/\kappa - 1$. As $x$ ranges from $1$ to $\kappa$:
    If $x = 1$, $y = 1/\kappa - 1$.
    If $x = \kappa$, $y = \kappa/\kappa - 1 = 0$.
    So, $y \in [1/\kappa - 1, 0]$. This means $|y| \le 1 - 1/\kappa$, which is the required domain for $P(y)$.
    We have $x^{-1} = (1/\kappa) \cdot (x/\kappa)^{-1} = (1/\kappa) \cdot (1 + (x/\kappa - 1))^{-1} = (1/\kappa) \cdot (1+y)^{-1}$.
    Define the approximating polynomial for $x^{-1}$ as $r_{\kappa, \delta_{\text{final}}}(x) = (1/\kappa) \cdot P(x/\kappa - 1)$.
    The error is:
    $$ |r_{\kappa, \delta_{\text{final}}}(x) - x^{-1}| = \left|\frac{1}{\kappa}P\left(\frac{x}{\kappa} - 1\right) - \frac{1}{\kappa}(1+y)^{-1}\right| = \frac{1}{\kappa} |P(y) - (1+y)^{-1}| \le \frac{1}{\kappa} \delta_P $$
    We want this final error to be $\le \delta/\kappa$. So, we set $(1/\kappa) \delta_P = \delta/\kappa$, which implies $\delta_P = \delta$.
    The degree of $r_{\kappa, \delta}(x)$ is the same as the degree of $P(y)$, which is $O(\sqrt{\kappa} \log(\kappa/\delta_P)) = O(\sqrt{\kappa} \log(\kappa/\delta))$.
\end{proof}

\section{Query lower bound via Wishart Matrices}\label{sec:wishart}
Wishart matrices are a cornerstone of multivariate statistics, and key calculations like the trace of an inverse Wishart matrix $(\tr(W^{-1}))$, are essential in many areas of statistics and machine learning. For example, they are vital for Bayesian methods involving precision matrices (the inverse of covariance matrices) and for analyzing multivariate error metrics. Therefore, understanding the fundamental limits of how efficiently we can estimate these traces is very important

We briefly present an existing theorem that establishes a lower bound on the number of matrix-vector product queries needed to estimate the trace of the inverse of a $d \times d$ Wishart matrix $W \sim \text{Wishart}(d)$. Subsequently, we extend this theorem to the trace of its inverse raised to any power $p>1/2$.

\begin{theorem}[{\cite[Lower Bound for Inverse Trace Estimation]{chewi2024query}}]\label{thm:inv_trace_lower_bd_rephrased_rephrased}
    For a $d \times d$ Wishart matrix $W \sim \text{Wishart}(d)$, where $d \ge 2$, consider any algorithm that performs $n$ matrix-vector product queries with $W$. If this algorithm outputs an estimator $\widehat{\text{tr}}$ such that, with a probability of at least $1-\delta$ (where $\delta > 0$ is dependent only on $C$), $C^{-1} \tr(W^{-1}) \le \widehat{\text{tr}} \le C\tr(W^{-1})$, then the number of queries $n$ must satisfy $n \ge \Omega(d)$.
\end{theorem}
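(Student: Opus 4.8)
The plan is an information-theoretic, adaptive-adversary argument: I would show that after $n = o(d)$ queries the random variable $\tr(W^{-1})$ remains, conditionally on everything the algorithm has observed, a \emph{fixed multiplicatively heavy-tailed} quantity, so that no transcript-measurable estimator can land within a factor $C$ with probability $1-\delta$. We may assume the algorithm is deterministic (condition on its internal randomness) and that its $n$ adaptively chosen probe vectors are orthonormal, spanning a subspace $V$; then the transcript reveals exactly the blocks $W_{11} = \Pi_V W \Pi_V$ and $W_{21} = \Pi_{V^\perp} W \Pi_V$. Writing $W = GG^\top$ with $G$ a $d\times d$ matrix of i.i.d.\ standard Gaussians (the Wishart$(d)$ model), the posterior lemma of Braverman et al.\ (Proposition~\ref{prop:wishart_posterior}) says that, conditionally on the transcript, the Schur complement $S \defeq W_{22} - W_{21}W_{11}^{-1}W_{21}^\top$ is distributed as a Wishart$(d-n)$ matrix on $V^\perp$ and is \emph{independent} of the transcript. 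The block-inverse identity then gives the exact decomposition
$$ \tr(W^{-1}) \;=\; \tr(W_{11}^{-1}) \;+\; \tr\!\big(S^{-1}(I + MM^\top)\big), \qquad M \defeq W_{21}W_{11}^{-1}, $$
in which $\tr(W_{11}^{-1})$ and $M$ are transcript-measurable while $S$ carries all the residual randomness.

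Next I would restrict to the regime $n \le \epsilon_0 d$ for a small absolute constant $\epsilon_0$ (the eventual $\Omega(d)$ coming from $n > \epsilon_0 d$) and control the two transcript-measurable pieces. Non-asymptotic bounds on the extreme singular values of Gaussian matrices (Davidson--Szarek; see \cite{vershynin2018high,szarek1991condition}) show that on an event $E$ of probability $1-e^{-\Omega(d)}$ one has $\lambda_{\min}(W_{11}) = \Theta(d)$ and $\|W\| = O(d)$, hence $\tr(W_{11}^{-1}) = O(n/d) = O(1)$ and $\|M\| = O(1)$, so $I \preceq I + MM^\top \preceq b_* I$ for an absolute constant $b_*$. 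Intersecting $E$ with the independent event $\{\tr(S^{-1}) \ge 1\}$ (also of probability $1-e^{-\Omega(d)}$, since the smallest eigenvalue of a Wishart$(d-n)$ matrix is $\Theta(1/d)$) yields a good event $E'$ on which
$$ \tr(S^{-1}) \;\le\; \tr(W^{-1}) \;\le\; b'\,\tr(S^{-1}) $$
for an absolute constant $b'$. Thus, conditionally on the transcript, $\tr(W^{-1})$ is trapped within an \emph{absolute} multiplicative factor of the single transcript-independent random variable $Z \defeq \tr(S^{-1})$, $S \sim \text{Wishart}(d-n)$.

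The crux, and the step I expect to be the main obstacle, is a quantitative ``multiplicative spread'' bound for $Z$ that is uniform in $k = d-n \ge d/2$: for every $a>0$ and every $\Lambda \le \mathrm{poly}(C)$,
$$ \mathbb{P}\big(Z \in [a, \Lambda a]\big) \;\le\; 1 - 3\delta \qquad\text{whenever } \delta = \delta(C) \text{ is small enough.} $$
This rests on two facts about square Wishart matrices, each of which must be made uniform in $k$: (i) the median of $Z$ is $\Theta(k)$ -- the inverse spectrum is dominated by the $\Theta(1/k)$-scale smallest eigenvalue together with an $\Theta(k)$-sized bulk; and (ii) the polynomial lower tail bound $\mathbb{P}(Z \ge \lambda k) \ge \Omega(\lambda^{-1/2})$ for $\lambda \ge 1$, which follows from $Z \ge 1/\lambda_{\min}(S)$ and the anti-concentration estimate $\mathbb{P}(\lambda_{\min}(S) \le t/k) = \Omega(\sqrt t)$ for the least eigenvalue of a square Gaussian Wishart (Edelman \cite{edelman1988eigenvalues}; Szarek \cite{szarek1991condition}). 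A short case analysis on the position of $[a,\Lambda a]$ relative to the scale $k$ then closes it: if $\Lambda a \lesssim k$ the window misses at least half the mass from above; if $a \gtrsim k$ it misses at least half from below; and if it straddles $k$, the tail beyond $\Lambda k$ still carries mass $\Omega(\Lambda^{-1/2}) = \Omega(1/C) > 3\delta$. Reconciling this with the $O(1)$ slack from the previous step -- so that the admissible width $\Lambda = O(C^2)$ is still small relative to how slowly $Z$'s tail decays -- is exactly where the polynomial (rather than exponential) nature of the tail is essential.

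Finally I would assemble the contradiction. Suppose the algorithm succeeds with probability $\ge 1-\delta$ using $n \le \epsilon_0 d$ queries. On $E'$, the success bound $C^{-1}\tr(W^{-1}) \le \widehat{\tr} \le C\,\tr(W^{-1})$ combined with $\tr(S^{-1}) \le \tr(W^{-1}) \le b'\tr(S^{-1})$ forces $Z \in [\widehat{\tr}/(Cb'),\, C\widehat{\tr}]$, a transcript-measurable interval of multiplicative width $\Lambda = C^2 b' = O(C^2)$. Since $S$, hence $Z$, is independent of the transcript while $\widehat{\tr}$ is a function of it, the spread bound shows the conditional probability of this containment is at most $1-3\delta$; taking expectations and adding the $e^{-\Omega(d)}$ probability that $E'$ fails, the overall success probability is at most $1-3\delta + e^{-\Omega(d)} < 1-\delta$ once $d$ exceeds a constant depending on $C$, a contradiction. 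Hence $n > \epsilon_0 d = \Omega(d)$, with the sub-constant range of $d$ being trivial; the subsequent extension to $\tr(W^{-p})$ for $p>1/2$ follows the same template, the only change being that the relevant heavy tail becomes that of $\sum_i \lambda_i(S)^{-p}$, which is only heavier.
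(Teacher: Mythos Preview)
Your scaffold shares the two essential ingredients with the paper's argument (which appears as the proof of Theorem~\ref{thm:inv_power_trace_estimation} and specializes to $p=1$): the Braverman et al.\ posterior (Proposition~\ref{prop:wishart_posterior}) and the polynomial anti-concentration of $\lambda_{\min}$ for a square Wishart (Proposition~\ref{prop:wishart_smallest_eig}). However, there is a real gap in your sandwich step. You assert that on an event of probability $1-e^{-\Omega(d)}$ one has $\lambda_{\min}(W_{11})=\Theta(d)$ and $\|M\|=O(1)$, citing Davidson--Szarek. That bound applies to a \emph{fixed} $n\times d$ Gaussian block; here $V$ is chosen \emph{adaptively} from the responses, so the block $W_{11}=\Pi_V W\Pi_V$ is not marginally a rectangular Wishart. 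No uniform statement of this kind can hold: for every realization of $W$ there is a subspace $V$ (the one containing the bottom eigenvector) with $\lambda_{\min}(W_{11})=\lambda_{\min}(W)=\Theta(1/d^2)$, which would make both $\tr(W_{11}^{-1})$ and $\|M\|$ blow up. Arguing that no $o(d)$-query algorithm can steer $V$ close enough to that direction is itself a lower bound of the same flavor you are trying to prove, so as written the step is circular. (There are also scaling slips---$\lambda_{\min}$ of $\text{Wishart}(k)$ is $\Theta(1/k^2)$, not $\Theta(1/k)$, and the median of $\tr(S^{-1})$ is $\Theta(k^2)$, not $\Theta(k)$---but these are cosmetic.)

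The paper sidesteps this entirely by never attempting the two-sided sandwich $\tr(S^{-1})\le \tr(W^{-1})\le b'\,\tr(S^{-1})$. It first uses the \emph{algorithm-independent} tail bound $\Pr\{\tr(W^{-1})\le C'd^2\}\ge 1/2$ (Proposition~\ref{prop:wishart_inverse_trace_shortened}) together with the success event to force $\Pr\{\widehat{\tr}\le CC'd^2\}\ge \tfrac12-\delta$. It then needs only the trivial one-sided inequality $\tr(W^{-1})\ge 1/\lambda_{\min}(W)$, combined with $\lambda_{\min}(W)\le \lambda_{\min}(\widetilde W)$ (Proposition~\ref{prop:wishart_smaller_than_posterior}) and the anti-concentration $\Pr\{\lambda_{\min}(\widetilde W)\le x/(d-n)^2\}\asymp\sqrt{x}$, to show that conditionally on the transcript $\tr(W^{-1})$ exceeds $C^2C'd^2$ with probability at least a constant $\epsilon=\epsilon(C)$. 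This yields $\Pr\{\widehat{\tr}<C^{-1}\tr(W^{-1})\}\ge \epsilon(\tfrac12-\delta)$, contradicting the assumed failure bound $\delta$ once $\delta<\epsilon/(2+2\epsilon)$. Because only the \emph{lower} bound on $\tr(W^{-1})$ is tied to the posterior $\widetilde W$, no control on $W_{11}$ or $M$ is ever needed, and the adaptivity issue disappears. Your block-inverse decomposition and multiplicative-spread lemma are correct in spirit but unnecessary; the paper gets by with the cruder $\tr(W^{-1})\ge 1/\lambda_{\min}(\widetilde W)$, which already carries the required heavy tail.
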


\begin{theorem}[Lower Bound for Inverse Power Trace Estimation]\label{thm:inv_power_trace_estimation}
    For a $d \times d$ Wishart matrix $W \sim \text{Wishart}(d)$, where $d \ge 2$, consider any algorithm that performs $n$ matrix-vector product queries with $W$. If this algorithm outputs an estimator $\widehat{\text{tr}}$ such that, with a probability of at least $1-\delta$ (where $\delta > 0$ is dependent only on $C$), $C^{-1} \tr(W^{-p}) \le \widehat{\text{tr}} \le C\tr(W^{-p})$ where $p > 1/2$, then the number of queries $n$ must satisfy $n \ge \Omega(d)$.
\end{theorem}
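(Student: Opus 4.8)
The plan is to follow the $p=1$ argument behind Theorem~\ref{thm:inv_trace_lower_bd_rephrased_rephrased} (from \cite{chewi2024query}) and to isolate the two places where the exponent matters. Write $W=GG^\top$ with $G\in\reals^{d\times d}$ having i.i.d.\ standard Gaussian entries, so $W\sim\mathrm{Wishart}(d)$, and suppose an algorithm makes $n$ (possibly adaptive) matrix-vector queries $v_1,\dots,v_n$, observing $Wv_1,\dots,Wv_n$; let $\mathcal F$ be the $\sigma$-algebra generated by this transcript, so the estimator $\widehat{\tr}$ is $\mathcal F$-measurable. Assume for contradiction that $n\le d/2$. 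By Proposition~\ref{prop:wishart_posterior} we may take $v_1,\dots,v_n$ orthonormal, spanning a subspace $V$ of dimension $n$, and the posterior law of $W$ given $\mathcal F$ is that of a transcript-measurable matrix plus an independent $\widetilde W\sim\mathrm{Wishart}(m)$, $m:=d-n\ge d/2$, supported on $V^\perp$; concretely the Schur complement of the $V$-block $A:=V^\top WV$ equals $\widetilde W$ and is independent of $\mathcal F$. Since $\widetilde W^{-1}$ is then the lower-right $m\times m$ block of $W^{-1}$, Cauchy interlacing gives $\lambda_k(W^{-1})\ge\lambda_k(\widetilde W^{-1})$ for $1\le k\le m$, hence
\[
 \tr(W^{-p})\;=\;\sum_{k=1}^{d}\lambda_k(W^{-1})^p\;\ge\;\sum_{k=1}^{m}\lambda_k(\widetilde W^{-1})^p\;=\;\tr(\widetilde W^{-p}).
\]

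For a matching upper bound I would use the block-inverse identity $W^{-1}=\diag(A^{-1},0)+M\widetilde W^{-1}M^\top$, where $M$ is the $d\times m$ matrix with $V$-part $A^{-1}B$ and $V^\perp$-part $-I$ (and $B:=V^\top WV^\perp$), together with the inequality $\tr((X+Y)^p)\le C_p(\tr(X^p)+\tr(Y^p))$ for $X,Y\succeq0$ (a constant $C_p$; e.g.\ from the Schatten-$p$ triangle inequality) and $\tr\big((M\widetilde W^{-1}M^\top)^p\big)\le\|M^\top M\|_{\mathrm{op}}^p\,\tr(\widetilde W^{-p})$, to get $\tr(W^{-p})\le C_p\big(\tr(A^{-p})+(1+\|A^{-1}B\|_{\mathrm{op}}^2)^p\,\tr(\widetilde W^{-p})\big)$. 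Because $A$ and $B$ are directly observed, hence $\mathcal F$-measurable, it then suffices to argue that with probability $1-o(1)$ the observed block satisfies $\lambda_{\min}(A)=\Omega(d)$ and $\|A^{-1}B\|_{\mathrm{op}}=O(1)$ --- intuitively, $n\le d/2$ queries cannot expose a near-null direction of a $\mathrm{Wishart}(d)$ matrix --- so that on a transcript-measurable event $\mathcal E$ of probability $1-o(1)$ one has $\tr(A^{-p})=o(m^p)$ and $\tr(W^{-p})\le K\,\tr(\widetilde W^{-p})+o(m^p)$ for a constant $K=K(p)$.

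The anti-concentration input, which is where $p>1/2$ is essential, is the following. Near the hard edge of the Marchenko--Pastur law $\lambda_k(\widetilde W)\asymp k^2/m$, so $\tr(\widetilde W^{-p})=\sum_k\lambda_k(\widetilde W)^{-p}$ behaves like $m^p\sum_k k^{-2p}$ --- a quantity of order $m^p$ dominated by the $O(1)$ smallest eigenvalues, which fluctuate by $\Theta(1)$ multiplicative factors (for $p=\tfrac12$ the sum would instead carry an extra $\log m$ and concentrate, which is why strictness is needed). Using standard hard-edge estimates for Wishart eigenvalues \cite{edelman1988eigenvalues,szarek1991condition,vershynin2018high}, in particular the small-ball bound $\Pr[\lambda_{\min}(\widetilde W)\le s/m]=\Theta(\sqrt s)$ as $s\downarrow0$, I would record: (i) there are constants $0<c_1<c_2$ (depending on $p$) with $\Pr\big[c_1 m^p\le\tr(\widetilde W^{-p})\le c_2 m^p\big]\ge\tfrac14$; and (ii) for every $b$, $\Pr[\tr(\widetilde W^{-p})\ge b\,m^p]\ge\gamma(b)>0$, with $\gamma(b)$ bounded below by a positive constant as $d\to\infty$.

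To conclude, fix a transcript in $\mathcal E$ and let $t$ be the resulting value of $\widehat{\tr}$. If $t<c_1 m^p/C$, then correctness forces $\tr(W^{-p})\le Ct<c_1 m^p$, which fails on the event in (i); if $t>2CKc_2 m^p$, then correctness forces $\tr(W^{-p})\ge t/C>2Kc_2 m^p$, which fails on the intersection of the event in (i) with $\mathcal E$, where $\tr(W^{-p})\le Kc_2 m^p+o(m^p)$; and if $t\in[c_1 m^p/C,\,2CKc_2 m^p]$, then taking $b:=3C^2Kc_2$ the event in (ii) forces $\tr(W^{-p})\ge\tr(\widetilde W^{-p})\ge b\,m^p>Ct$, which again fails. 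Since $\widetilde W\perp\mathcal F$, in each case the conditional failure probability is at least $\min\{\tfrac14,\gamma(3C^2Kc_2)\}-o(1)=:\gamma^\ast>0$, a constant depending only on $C$; taking expectations over $\mathcal F$ contradicts a success probability $\ge1-\delta$ once $\delta=\delta(C)<\gamma^\ast$. Hence $n>d/2$, i.e.\ $n=\Omega(d)$. I expect the main obstacle to be the contamination bound of the second paragraph: for $p=1$ the block-inverse identity yields the exact split $\tr(W^{-1})=\tr(A^{-1})+\|\widetilde W^{-1/2}B^\top A^{-1}\|_{\mathrm F}^2+\tr(\widetilde W^{-1})$, so the extra contribution is transparently controlled, whereas for $p\neq1$ one must pass through Schatten-norm / operator-monotonicity inequalities and rule out an adaptive querier exploiting the cross block $B$; a secondary point is making the hard-edge estimates (i)--(ii) uniform in $d$ for all $p>1/2$, for which the explicit Wishart eigenvalue density together with its hard-edge scaling limit should suffice.
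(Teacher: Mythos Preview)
Your framework is right --- Proposition~\ref{prop:wishart_posterior}, the interlacing lower bound $\tr(W^{-p})\ge\tr(\widetilde W^{-p})$, and hard-edge anti-concentration are exactly the ingredients --- but you have made the argument harder than necessary, and the step you flag as the main obstacle turns out to be entirely avoidable.

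The paper's proof never bounds $\tr(W^{-p})$ above by $K\,\tr(\widetilde W^{-p})+o(m^p)$ conditionally on the transcript. Instead it uses an \emph{unconditional} high-probability upper bound: from $\lambda_j(W)^{-1}\lesssim d^2/j^2$ (the Szarek estimate behind Proposition~\ref{prop:wishart_inverse_trace_shortened}) and $p>\tfrac12$ one gets $\tr(W^{-p})\le C'd^{2p}$ with probability at least $\tfrac12$, over the randomness of $W$ alone. Intersecting with the success event yields the transcript-measurable event $\{\widehat{\tr}\le CC'd^{2p}\}$ with probability $\ge\tfrac12-\delta$. On that event, failure is forced by the single inequality $\tr(W^{-p})\ge\lambda_{\min}(W)^{-p}$, combined with $\lambda_{\min}(W)\le\lambda_{\min}(\widetilde W)$ (Proposition~\ref{prop:wishart_smaller_than_posterior}) and the small-ball bound $\Pr\{\lambda_{\min}(\widetilde W)\le x/(d-n)^2\}\asymp\sqrt x$ (Proposition~\ref{prop:wishart_smallest_eig}), which gives a constant conditional probability that $\tr(W^{-p})>C^2C'd^{2p}\ge C\,\widehat{\tr}$.

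In your language this collapses the three-case analysis to Case~3 alone: Case~2 is handled by upper-bounding $\widehat{\tr}$ directly rather than $\tr(W^{-p})$, and Case~1 is subsumed. The payoff is that you never need to control $\tr(A^{-p})$ or $\|A^{-1}B\|_{\mathrm{op}}$ under adaptive queries --- which, as you correctly suspect, is delicate, since an adaptive querier might align $V$ with small-eigenvalue directions of $W$ and make $A$ ill-conditioned. Your route may well be completable, but the paper's route sidesteps the issue cleanly; and the role of $p>\tfrac12$ becomes simply the convergence of $\sum_j j^{-2p}$ in the unconditional upper bound, rather than the more refined anti-concentration statement~(i) for $\tr(\widetilde W^{-p})$ that you invoke.
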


\subsection{Preliminaries} \label{subsec:prelim_block_krylov}
We record down some useful properties of Wishart matrices that will be essential to subsequent arguments. These properties relate to how the distribution of a Wishart matrix transforms under matrix-vector product queries and provide insights into the behavior of its eigenvalues.

\begin{proposition}[{\cite[Lemma 3.4]{braverman2020gradient}}] \label{prop:wishart_posterior} 
    Let $W \sim \Wishart(d).$ Then, for any sequence of $n < d$ (possibly adaptive) queries $v_1, \dotsc, v_n$ and responses $w_1 = W v_1, \dotsc, w_n = W v_n$, there exists an orthogonal matrix $V \in \BR^{d \times d}$ and matrices $Y_1 \in \BR^{n \times n}, Y_2 \in \BR^{(d-n) \times n}$ that only depend on $v_1, \dots, v_n, w_1, \dots, w_n$, such that $V W V^\top$ has the block form
    \begin{align*}
        VWV^\top = \begin{bmatrix} Y_1 Y_1^\top & Y_1 Y_2^\top \\ Y_2 Y_1^\top & Y_2 Y_2^\top + \widetilde{W} \end{bmatrix}\,.
    \end{align*}
    Here, conditionally on $v_1, \dotsc, v_n, w_1, \dotsc, w_n$, the matrix $\widetilde{W}$ has the $\Wishart(d-n)$ distribution.
\end{proposition}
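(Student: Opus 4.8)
\quad
The plan is to realize $W$ as $W = GG^\top$ for a $d\times d$ matrix $G$ with i.i.d.\ $\mathcal{N}(0,1)$ entries, and to read off the asserted block form as an adaptive version of the classical fact that a Schur complement of a Wishart matrix is again a smaller Wishart. Observe first that knowing the responses $w_i = Wv_i$ for $i=1,\dots,n$ is the same as knowing the action of $W$ on $V_1 := \mathrm{span}(v_1,\dots,v_n)$; assume for now that the $v_i$ are linearly independent, so $\dim V_1 = n$. Choose an orthogonal $V\in\BR^{d\times d}$ whose first $n$ rows span $V_1$ and write
\[
VWV^\top \;=\; \begin{bmatrix} A & B^\top \\ B & D \end{bmatrix}, \qquad A\in\BR^{n\times n},\quad B\in\BR^{(d-n)\times n},\quad D\in\BR^{(d-n)\times(d-n)}.
\]
Because each $Vv_i$ has vanishing last $d-n$ coordinates, the responses determine precisely the first $n$ columns of $VWV^\top$, that is, $A$ and $B$; hence $V$, $A$, $B$ are measurable functions of the transcript $\mathcal{T}_n = (v_1,w_1,\dots,v_n,w_n)$, and $A\succ 0$ almost surely. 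Setting $Y_1 := A^{1/2}$ and $Y_2 := BA^{-1/2}$, which are $\mathcal{T}_n$-measurable, one checks directly that $Y_1Y_1^\top = A$, $Y_2Y_1^\top = B$, $Y_1Y_2^\top = B^\top$ and $Y_2Y_2^\top = BA^{-1}B^\top$, so the whole proposition reduces to showing that the Schur complement $\widetilde W := D - BA^{-1}B^\top$ satisfies $\widetilde W \mid \mathcal{T}_n \sim \Wishart(d-n)$.

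The heart of the argument is this reduced claim for a single query ($n=1$), which I would prove by Gaussian conditioning. Partition $G$ into a top block $G_1\in\BR^{n\times d}$ and a bottom block $G_2\in\BR^{(d-n)\times d}$ conformally with the blocks above, so $A = G_1G_1^\top$, $B = G_2G_1^\top$, $D = G_2G_2^\top$. Condition first on $G_1$ and then on $B$: the component of $G_2$ lying in the row space of $G_1$ is then forced to equal $BA^{-1}G_1$, whereas the orthogonal component $G_2^\perp$ remains an unconstrained Gaussian — the two components are independent, being projections of a Gaussian onto complementary subspaces — and can be written as $HO^\top$, where $O\in\BR^{d\times(d-n)}$ has orthonormal columns spanning the orthogonal complement of the row space of $G_1$ and $H\in\BR^{(d-n)\times(d-n)}$ has i.i.d.\ $\mathcal{N}(0,1)$ entries independent of $(G_1,B)$. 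Substituting $G_2 = BA^{-1}G_1 + HO^\top$ into $D = G_2G_2^\top$, the cross terms vanish because $G_1O = 0$, leaving $D = BA^{-1}B^\top + HH^\top$ with $HH^\top\sim\Wishart(d-n)$ and a conditional law that does not depend on $(G_1,B)$; this gives $\widetilde W \mid \mathcal{T}_n \sim \Wishart(d-n)$, and moreover the residual is genuinely ``fresh'' in the sense that its conditional law is the \emph{fixed} distribution $\Wishart(d-n)$. For non-adaptive $v_1,\dots,v_n$ the same computation carried out with a rank-$n$ partition yields the result for all $n$ simultaneously, recovering the classical Wishart Schur-complement identity.

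For adaptive queries the subspace $V_1$, and hence the partition, depends on the responses, so I would bootstrap the one-query case by induction on the number of queries $k$ along the filtration $\mathcal{F}_k = \sigma(v_1,w_1,\dots,v_k,w_k)$. The inductive hypothesis is that there exist $\mathcal{F}_k$-measurable matrices $V^{(k)}$ (orthogonal), $Y_1^{(k)}\in\BR^{k\times k}$ and $Y_2^{(k)}\in\BR^{(d-k)\times k}$ such that
\[
V^{(k)}W(V^{(k)})^\top \;=\; \begin{bmatrix} Y_1^{(k)}(Y_1^{(k)})^\top & Y_1^{(k)}(Y_2^{(k)})^\top \\ Y_2^{(k)}(Y_1^{(k)})^\top & Y_2^{(k)}(Y_2^{(k)})^\top + \widetilde W^{(k)} \end{bmatrix},
\]
where, conditionally on $\mathcal{F}_k$, the residual $\widetilde W^{(k)}$ equals in law the fixed $\Wishart(d-k)$. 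For the step $k\to k+1$: the new query $v_{k+1}$ is $\mathcal{F}_k$-measurable; writing its coordinates in the $V^{(k)}$-frame as $(a;b)$ with $a\in\BR^k$, $b\in\BR^{d-k}$, the vector $V^{(k)}w_{k+1}$ equals an $\mathcal{F}_k$-measurable vector plus $\widetilde W^{(k)}b$, so conditionally on $\mathcal{F}_k$ the only new information is a single matrix-vector query with direction $b$ against the fresh $\widetilde W^{(k)}\sim\Wishart(d-k)$. Applying the one-query computation to $\widetilde W^{(k)}$ produces an orthogonal $\bar V\in\BR^{(d-k)\times(d-k)}$, a scalar/vector forming the $(k+1)$-st layer, and a residual $\widetilde W^{(k+1)}\sim\Wishart(d-k-1)$ independent of the enlarged transcript; taking $V^{(k+1)} := \mathrm{diag}(I_k,\bar V)\,V^{(k)}$ and appending the new scalar/vector as an extra column to the $d\times k$ matrix $Y^{(k)}$ obtained by stacking $Y_1^{(k)}$ above $Y_2^{(k)}$, a short block computation shows these objects satisfy the level-$(k+1)$ identity with every piece $\mathcal{F}_{k+1}$-measurable. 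At $k=n$ this is exactly the claimed decomposition.

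I expect the main obstacle to be precisely this adaptive bookkeeping. The delicate point is the deferred-decisions step in the induction: one must verify that conditioning on $\mathcal{F}_k$ leaves $\widetilde W^{(k)}$ with its \emph{unconditional} $\Wishart(d-k)$ law — so that the next query genuinely acts on a fresh Wishart — and this is exactly where the independence supplied by the Gaussian computation has to be threaded carefully through the adaptive interaction. A more mechanical point is checking that conjugating by $\mathrm{diag}(I_k,\bar V)$ together with appending one column to $Y^{(k)}$ reproduces the stated block identity, i.e.\ that the construction is one step of a partial Cholesky / Schur-complement recursion on $W$. Lastly, the degenerate cases — a redundant query ($b=0$) or singular $A$ — occur with probability zero once the queries are taken linearly independent, and are absorbed into that reduction; discarding any redundant query only lowers the effective number of queries.
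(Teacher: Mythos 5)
The paper does not prove this proposition; it is imported verbatim from Braverman et al.\ (their Lemma~3.4) with no argument given, so there is no ``paper proof'' to compare against. Your proposal is, as far as I can check, a correct and essentially complete reconstruction of the standard argument for this fact: reduce to the Schur complement $\widetilde W = D - BA^{-1}B^\top$ via $Y_1 = A^{1/2}$, $Y_2 = BA^{-1/2}$; show by Gaussian conditioning on $G = (G_1; G_2)$ that the part of $G_2$ orthogonal to the row space of $G_1$ is an independent fresh Gaussian, so that $D = BA^{-1}B^\top + HH^\top$; and handle adaptivity by induction along the filtration, using that the $(k+1)$-st query reduces, conditionally on $\mathcal F_k$, to a single query against the fresh $\Wishart(d-k)$ residual. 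The block bookkeeping you describe ($V^{(k+1)} = \mathrm{diag}(I_k,\bar V)V^{(k)}$, appending one column to the stacked $Y^{(k)}$) does check out and is exactly a partial Cholesky recursion. The only loose end is the redundant-query case: if $v_{k+1} \in \mathrm{span}(v_1,\dots,v_k)$ (i.e.\ $b=0$), this is not a probability-zero event --- the algorithm may choose such a query --- and simply ``discarding'' it leaves you with a residual of size $d-m$ for $m<n$ rather than the stated $d-n$. To get the literal statement you should either note, as the source does, that one may assume without loss of generality that the queries are linearly independent (a dependent query conveys no new information, so the conclusion for $m$ effective queries is only stronger), or spend the extra dimension by conditioning on an arbitrary $\mathcal F_k$-measurable direction. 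This is cosmetic; the substance of your argument is sound.
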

\noindent
We first record some important facts that we will use later on.
Throughout, let $K$ be an odd integer.
The following is a standard approximation-theoretic result:

\begin{proposition}[{\cite[Lemma 3.5]{braverman2020gradient}}] \label{prop:wishart_smaller_than_posterior} 
    For any matrices $Y_1 \in \BR^{n \times n}$, $Y_2 \in \BR^{(d-n) \times n}$, and any symmetric matrix $\widetilde{W} \in \BR^{(d-n) \times (d-n)}$, it holds that
    \begin{align*}
        \lambda_{\min}\Bigl(\begin{bmatrix} Y_1 Y_1^\top & Y_1 Y_2^\top \\ Y_2 Y_1^\top & Y_2 Y_2^\top + \widetilde{W} \end{bmatrix} \Bigr)
        \le \lambda_{\min}(\widetilde W)\,.
    \end{align*}
\end{proposition}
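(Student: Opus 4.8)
The plan is to prove Proposition~\ref{prop:wishart_smaller_than_posterior} by exhibiting a single well-chosen test vector for the Rayleigh quotient. Write the block matrix as $M = YY^\top + Z$, where $Y = \begin{bmatrix} Y_1 \\ Y_2 \end{bmatrix} \in \BR^{d\times n}$ and $Z = \begin{bmatrix} 0 & 0 \\ 0 & \widetilde W\end{bmatrix}$, so that for $u = (u_1,u_2)$ one has $u^\top M u = \|Y_1^\top u_1 + Y_2^\top u_2\|^2 + u_2^\top \widetilde W u_2$. The first term is the quadratic form of the rank-$(\le n)$ matrix $YY^\top$, and the point is that it can be annihilated while simultaneously forcing the second term to be controlled by $\lambda_{\min}(\widetilde W)$. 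It is worth noting up front why the obvious route fails: restricting to the bottom $(d-n)$ block and invoking Cauchy interlacing only yields $\lambda_{\min}(M) \le \lambda_{\min}(Y_2 Y_2^\top + \widetilde W)$, and since $Y_2 Y_2^\top \psdge 0$ this right-hand side is $\ge \lambda_{\min}(\widetilde W)$ — the wrong direction. One is therefore forced to work in the full ambient space $\BR^d$ and to use that $\ker(Y^\top)$ has dimension at least $d-n$.

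The core step is a dimension count. Let $v \in \BR^{d-n}$ be a unit eigenvector of $\widetilde W$ with $\widetilde W v = \lambda_{\min}(\widetilde W)\,v$, and let $U = \{(u_1,u_2) \in \BR^n \times \BR^{d-n} : u_2 \in \mathrm{span}(v)\}$, a subspace of dimension $n+1$. Since $\ker(Y^\top)$ has codimension at most $n$, the intersection $\ker(Y^\top) \cap U$ has dimension at least $(d-n) + (n+1) - d = 1$, so I may pick a nonzero $u = (u_1,u_2)$ lying in it. For this $u$ we have $Y_1^\top u_1 + Y_2^\top u_2 = 0$ and $u_2 = c\,v$ for some scalar $c$, hence $u^\top M u = 0 + u_2^\top \widetilde W u_2 = c^2\lambda_{\min}(\widetilde W) = \|u_2\|^2\,\lambda_{\min}(\widetilde W)$. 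The variational characterization of the smallest eigenvalue then gives
$$ \lambda_{\min}(M) \;\le\; \frac{u^\top M u}{\|u\|^2} \;=\; \frac{\|u_2\|^2}{\|u_1\|^2 + \|u_2\|^2}\,\lambda_{\min}(\widetilde W) \;\le\; \lambda_{\min}(\widetilde W), $$
where the final inequality uses $\widetilde W \psdge 0$ (so $\lambda_{\min}(\widetilde W) \ge 0$ and the scalar prefactor lies in $[0,1]$). I would remark that positive semidefiniteness of $\widetilde W$ is genuinely needed here and is exactly what every application supplies, since there $\widetilde W$ is $\Wishart(d-n)$-distributed by Proposition~\ref{prop:wishart_posterior}.

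The only place to be careful is the degenerate case $u_2 = 0$ (the intersection vector could lie entirely in the top block when $Y_1^\top$ is rank-deficient); but then $u^\top M u = 0$ and the bound $0 \le \lambda_{\min}(\widetilde W)$ still closes the argument, so no separate treatment is required. As an alternative, a one-line proof via Weyl's inequality also works: $\lambda_{\min}(M) = \lambda_d(YY^\top + Z) \le \lambda_{n+1}(YY^\top) + \lambda_{d-n}(Z) = 0 + \lambda_{\min}(\widetilde W)$, using that $YY^\top$ has rank $\le n$ and that padding $\widetilde W \psdge 0$ with $n$ zero rows and columns leaves its smallest eigenvalue in position $d-n$. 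I expect the dimension-count route to be the cleanest to write out in full; the main (and rather modest) obstacle is simply recognizing that the naive interlacing estimate runs the wrong way, so that one must exploit both the rank deficiency of $YY^\top$ and the semidefiniteness of $\widetilde W$.
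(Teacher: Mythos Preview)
The paper does not supply its own proof of this proposition; it is quoted verbatim from \cite[Lemma~3.5]{braverman2020gradient} and used as a black box in the proof of Theorem~\ref{thm:inv_power_trace_estimation}. So there is nothing to compare against on the paper's side.

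Your argument is correct, and both routes you give (the dimension-count test vector and the Weyl inequality) are valid. You are also right to flag that the statement as written in the paper --- ``any symmetric matrix $\widetilde W$'' --- is too strong: the inequality can fail when $\widetilde W$ is indefinite. A tiny counterexample is $n=1$, $d=2$, $Y_1=0$, $Y_2=1$, $\widetilde W=-1$, giving $M=0$ and hence $\lambda_{\min}(M)=0>-1=\lambda_{\min}(\widetilde W)$. Your proof uses $\widetilde W \psdge 0$ only at the final step (to ensure the prefactor $\|u_2\|^2/\|u\|^2\in[0,1]$ does not flip the inequality), and, as you observe, this hypothesis is automatically in force in the paper's sole application since there $\widetilde W\sim\Wishart(d-n)$ by Proposition~\ref{prop:wishart_posterior}. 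So the gap is in the paper's statement, not in your argument.
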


\begin{proposition}[{\cite[Extreme singular values of a Gaussian matrix]{edelman1988eigenvalues,vershynin2018high}}]\label{prop:wishart_smallest_eig}
    Let $W \sim \Wishart(d)$.
    For any $x \in [0, 1]$,
    \begin{align*}
        \Pr\bigl\{\lambda_{\min}(W) \le \frac{x}{d^2}\bigr\}
        \asymp \sqrt x\,.
    \end{align*}
    Also, there is a universal constant $C > 0$ such that
    \begin{align*}
        \Pr\{\lambda_{\max}(W) \ge C \, (1+t)\}
        &\le 2\exp(-dt)\,.
    \end{align*}
\end{proposition}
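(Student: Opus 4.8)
The plan is to reduce both tail bounds to the classical behavior of the extreme singular values of a square standard Gaussian matrix. Write $W = \frac1d GG^\top$ where $G \in \mathbb{R}^{d\times d}$ has i.i.d.\ $\mathcal{N}(0,1)$ entries (this is the normalization of $\Wishart(d)$ consistent with the $1/d^2$ scale appearing in the statement), so that $\lambda_i(W) = \sigma_i(G)^2/d$ for every $i$; in particular $\lambda_{\min}(W) = \sigma_{\min}(G)^2/d$ and $\lambda_{\max}(W) = \|G\|_{\mathrm{op}}^2/d$. After this reduction the two assertions become, respectively, a small-ball estimate for $\sigma_{\min}(G)$ and an upper-tail concentration bound for $\|G\|_{\mathrm{op}}$, both of which are standard.

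\textbf{Smallest eigenvalue.} Since $\{\lambda_{\min}(W) \le x/d^2\}$ is exactly the event $\{\sqrt{d}\,\sigma_{\min}(G) \le \sqrt{x}\}$, it suffices to prove $\Pr\{\sqrt d\,\sigma_{\min}(G) \le \varepsilon\} \asymp \varepsilon$ for $\varepsilon = \sqrt{x} \in [0,1]$. For the upper bound $\Pr\{\sigma_{\min}(G) \le \varepsilon/\sqrt d\} \le c_1\varepsilon$ I would invoke the Rudelson--Vershynin / Edelman small-ball estimate for square Gaussian matrices \cite{vershynin2018high,edelman1988eigenvalues}, valid for all $\varepsilon \ge 0$ and all $d$. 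For the matching lower bound $\Pr\{\sqrt d\,\sigma_{\min}(G) \le \varepsilon\} \ge c_2\varepsilon$ on $[0,1]$ I would use Edelman's exact density computation for $\sqrt d\,\sigma_{\min}(G)$ in the real case \cite{edelman1988eigenvalues}, whose density is bounded below by a positive constant on a fixed neighborhood of the origin (equivalently one may cite Szarek's two-sided condition-number estimates \cite{szarek1991condition}). If a self-contained argument is preferred, the lower bound can instead be obtained by conditioning on $d-1$ columns of $G$ and bounding $\sigma_{\min}(G)$ above by the distance of the last column to their span, a quantity whose law is explicit (a Gaussian coordinate times a bounded factor) and anti-concentrated near $0$ at scale $1/\sqrt d$. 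Either route yields $\Pr\{\lambda_{\min}(W) \le x/d^2\} \asymp \sqrt{x}$.

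\textbf{Largest eigenvalue.} Here $\{\lambda_{\max}(W) \ge C(1+t)\} = \{\|G\|_{\mathrm{op}} \ge \sqrt{dC(1+t)}\}$. Two standard inputs suffice: (i) $\mathbb{E}\|G\|_{\mathrm{op}} \le 2\sqrt d$, by Gordon's (or Slepian's) Gaussian comparison inequality; and (ii) $G \mapsto \|G\|_{\mathrm{op}}$ is $1$-Lipschitz with respect to the Frobenius norm ($\bigl|\,\|G\|_{\mathrm{op}} - \|G'\|_{\mathrm{op}}\,\bigr| \le \|G-G'\|_F$), so Gaussian concentration gives $\Pr\{\|G\|_{\mathrm{op}} \ge 2\sqrt d + s\} \le 2e^{-s^2/2}$ for all $s \ge 0$. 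Taking $s = \sqrt{2dt}$ shows that off an event of probability at most $2e^{-dt}$ we have $\|G\|_{\mathrm{op}} \le \sqrt d\,(2+\sqrt{2t})$, hence $\lambda_{\max}(W) = \|G\|_{\mathrm{op}}^2/d \le 4 + 4\sqrt{2t} + 2t \le C(1+t)$ for a suitable universal constant $C$ (using $\sqrt t \le (1+t)/2$). This is exactly the claimed bound.

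\textbf{Main obstacle.} The largest-eigenvalue half is routine once Gaussian concentration of $\|G\|_{\mathrm{op}}$ is in hand. The real content is the \emph{two-sided} small-ball estimate for $\sigma_{\min}(G)$, and in particular the lower bound $\Pr\{\sigma_{\min}(G) \le \varepsilon/\sqrt d\} \gtrsim \varepsilon$, i.e.\ the assertion that $\sigma_{\min}(G)$ genuinely charges a $\Theta(1/\sqrt d)$ neighborhood of $0$; citing Edelman's exact formula dispatches it immediately, whereas a first-principles proof needs the conditioning/distance-to-subspace argument together with a careful anti-concentration bound for the associated quadratic form. A minor but necessary preliminary is to fix the normalization of $\Wishart(d)$ explicitly, since the $1/d^2$ threshold in the statement forces $W = \frac1d GG^\top$ rather than $GG^\top$.
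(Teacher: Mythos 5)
The paper does not actually prove this proposition; it is imported verbatim as a known fact with citations to Edelman and Vershynin (it is Lemma-level background in \cite{chewi2024query} and \cite{braverman2020gradient} as well), so your job here is really to confirm the normalization and point to the right classical results, which you do. Your identification of the normalization $W = \frac{1}{d}GG^\top$ is correct and is indeed forced by the $x/d^2$ threshold, and both reductions ($\lambda_{\min}(W)\le x/d^2 \iff \sqrt{d}\,\sigma_{\min}(G)\le\sqrt{x}$, and the Gordon-plus-Gaussian-concentration argument for $\lambda_{\max}$) are correct; the largest-eigenvalue half is a complete proof as written.

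One genuine soft spot: your optional ``self-contained'' route to the lower bound $\Pr\{\sigma_{\min}(G)\le \varepsilon/\sqrt{d}\}\gtrsim\varepsilon$ does not work as described. The distance from the last column to the span of the other $d-1$ columns is exactly $|N(0,1)|$ (the norm of the projection onto a one-dimensional orthogonal complement), which is anti-concentrated at scale $1$, not at scale $1/\sqrt{d}$; so bounding $\sigma_{\min}(G)$ by a single such distance only yields $\Pr\{\sigma_{\min}(G)\le\varepsilon/\sqrt{d}\}\gtrsim\varepsilon/\sqrt{d}$, which is short of the claim by a factor of $\sqrt{d}$. Recovering the correct order requires either Edelman's exact density for $\sqrt{d}\,\sigma_{\min}(G)$ (or Szarek's two-sided estimates), as in your primary route, or a genuinely more involved argument exploiting all $d$ columns (e.g.\ via $\sum_i \mathrm{dist}(g_i,H_i)^{-2} = \tr\bigl((G^\top G)^{-1}\bigr)$), not a single conditioning step. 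Since your main argument is the citation --- which is exactly what the paper itself relies on --- this does not invalidate the proposal, but the fallback argument should be dropped or repaired.
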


\begin{proposition}[Bound on the inverse trace] \label{prop:wishart_inverse_trace_shortened}
    Let $W \sim \Wishart(d)$.
    Then, for any $\delta > 0$, with probability at least $1-\delta$, it holds that $\tr(W^{-1}) \le C_\delta d^2$ where $C_\delta$ is a constant depending only on $\delta$.
\end{proposition}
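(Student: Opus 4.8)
The plan is to write $\tr(W^{-1}) = \sum_{k=1}^{d}\lambda_k(W)^{-1}$ with the eigenvalues in increasing order $\lambda_1(W)\le\cdots\le\lambda_d(W)$, and to handle the two smallest eigenvalues separately from the remaining $d-2$. The heuristic is that $\lambda_1(W)\asymp\lambda_2(W)\asymp d^{-2}$ account for the bulk of $\tr(W^{-1})$ and require a ``hard-edge'' small-ball estimate, whereas $\sum_{k=3}^{d}\lambda_k(W)^{-1}$ has expectation $\bigO{d^2}$ and can be controlled by Markov's inequality. Concretely, let $W''$ be the top-left $(d-2)\times(d-2)$ principal submatrix of $W$; Cauchy's interlacing theorem gives $\lambda_{k+2}(W)\ge\lambda_k(W'')$ for $k=1,\dots,d-2$, hence $\sum_{k=3}^{d}\lambda_k(W)^{-1}\le\tr\bigl((W'')^{-1}\bigr)$ and
\begin{align*}
  \tr(W^{-1}) \;\le\; \lambda_1(W)^{-1} + \lambda_2(W)^{-1} + \tr\bigl((W'')^{-1}\bigr)\,.
\end{align*}
Since $\Wishart(d)$ is a (suitably $1/d$-normalized) matrix $GG^\top$ with $G$ a $d\times d$ Gaussian, $W''$ is a Wishart-type matrix of dimension $d-2$ with $d$ degrees of freedom, hence a.s. invertible, and the inverse-Wishart mean formula gives $\mathbb{E}\bigl[\tr((W'')^{-1})\bigr]\le Cd^2$ for a universal $C$: the two excess degrees of freedom are exactly what makes this expectation finite, and the $1/d$ scaling is what turns $\Theta(d)$ into $\Theta(d^2)$. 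Markov's inequality then gives $\tr((W'')^{-1})\le\tfrac{3C}{\delta}d^2$ with probability at least $1-\delta/3$.

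For $\lambda_1(W)^{-1}$, Proposition~\ref{prop:wishart_smallest_eig} provides a universal $c_0$ with $\Pr\{\lambda_{\min}(W)\le x/d^2\}\le c_0\sqrt{x}$ on $x\in[0,1]$; taking $x_\delta:=\min\{1,(\delta/3c_0)^2\}$ yields $\lambda_1(W)^{-1}\le d^2/x_\delta$ with probability at least $1-\delta/3$. For $\lambda_2(W)^{-1}$ I would reduce to the least eigenvalue of a genuine lower-dimensional Wishart: issue the single query $v_1=e_1$ and invoke Proposition~\ref{prop:wishart_posterior} with $n=1$ to obtain an orthogonal $V$ with
\begin{align*}
  VWV^\top = \begin{bmatrix} Y_1Y_1^\top & Y_1Y_2^\top \\ Y_2Y_1^\top & Y_2Y_2^\top + \widetilde{W} \end{bmatrix}, \qquad \widetilde{W}\sim\Wishart(d-1)\,.
\end{align*}
Let $M:=Y_2Y_2^\top+\widetilde{W}$ be the bottom-right $(d-1)\times(d-1)$ block; then $M\succeq\widetilde{W}$, so $\lambda_{\min}(M)\ge\lambda_{\min}(\widetilde{W})$, while interlacing of the principal submatrix $M$ inside $VWV^\top$ (which has the same spectrum as $W$) gives $\lambda_2(W)\ge\lambda_1(M)=\lambda_{\min}(M)$. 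Hence $\lambda_2(W)\ge\lambda_{\min}(\widetilde{W})$, and Proposition~\ref{prop:wishart_smallest_eig} applied to $\widetilde{W}\sim\Wishart(d-1)$ gives $\lambda_2(W)^{-1}\le(d-1)^2/x_\delta\le d^2/x_\delta$ with probability at least $1-\delta/3$. A union bound over the three events yields $\tr(W^{-1})\le(2/x_\delta+3C/\delta)\,d^2=:C_\delta d^2$ with probability at least $1-\delta$, where $C_\delta$ depends only on $\delta$. The degenerate case $d=2$ (where $\Wishart(d-1)=\Wishart(1)$) is handled directly, since there $\tr(W^{-1})=\tr(W)/\det(W)$ and a small-ball bound on $\det(W)$ suffices.

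The step I expect to require the most care is getting the ``peeling'' count right: removing only $\lambda_1$ leaves a $(d-1)$-dimensional Wishart with $d$ degrees of freedom whose inverse still has infinite mean, so two eigenvalues must be removed, and then $\lambda_2(W)^{-1}$ needs its own hard-edge bound. The device that makes that clean — bounding $\lambda_2(W)$ from below by the least eigenvalue of an honest $\Wishart(d-1)$ arising from one round of the posterior-splitting in Proposition~\ref{prop:wishart_posterior}, so that Proposition~\ref{prop:wishart_smallest_eig} can be reused verbatim — is the one genuinely non-obvious ingredient. The remaining work, chiefly matching the normalization of $\Wishart(d)$ so that $\mathbb{E}[\tr((W'')^{-1})]=\Theta(d^2)$ rather than $\Theta(d)$, is routine constant-chasing.
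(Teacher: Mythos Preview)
Your proof is correct but follows a genuinely different route from the paper's. The paper invokes a single external input, Szarek's per-eigenvalue tail bound $\Pr\{1/\lambda_j(W)\ge d^2/(\alpha^2 j^2)\}\le (C\alpha)^{j^2}$, union-bounds over $j$, and then sums $\sum_j 1/j^2\le \pi^2/6$ to get $\tr(W^{-1})\le (\pi^2/6\alpha^2)\,d^2$ directly. Your argument instead peels off the two smallest eigenvalues, controls each via the hard-edge estimate of Proposition~\ref{prop:wishart_smallest_eig} (the second one through the nice device of reading off a genuine $\Wishart(d-1)$ from one step of Proposition~\ref{prop:wishart_posterior} and interlacing), and bounds the remaining $d-2$ inverse eigenvalues by $\tr((W'')^{-1})$ for a principal submatrix $W''$ whose inverse has finite mean, whence Markov finishes. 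The trade-off: the paper's proof is shorter because Szarek's bound handles every eigenvalue uniformly in one stroke, while your approach is longer but more self-contained, using only the propositions already stated in the paper plus standard facts (Cauchy interlacing, the inverse-Wishart mean formula). Your observation that exactly two eigenvalues must be peeled, since a $(d-1)$-dimensional Wishart with $d$ degrees of freedom still has infinite inverse mean, is precisely the reason this decomposition works and is the key structural point of your argument.
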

\begin{proof}
According to~\cite[Theorem 1.2]{szarek1991condition}, for a universal constant $C > 0$, any $j=1,\dotsc,d$, and $\alpha \ge 0$:
\begin{align*}
    \Pr\Bigl\{\frac{1}{\lambda_j(W)} \ge \frac{d^2}{\alpha^2 j^2}\Bigr\} \le {(C\alpha)}^{j^2}.
\end{align*}
Let $\alpha < 1/C$. Define the event $E_\alpha = \{\text{for some } j, 1/\lambda_j(W) \ge d^2/(\alpha^2 j^2)\}$.
By the union bound:
\begin{align*}
    \Pr(E_\alpha) \le \sum_{j=1}^d {(C\alpha)}^{j^2} \lesssim \frac{1}{\sqrt{\log(1/(C\alpha))}}.
\end{align*}
We can choose $\alpha$ sufficiently small such that $\Pr(E_\alpha) \le \delta$ for any given $\delta > 0$.
On the complementary event $E_\alpha^\comp$, which occurs with probability at least $1-\delta$, we have $1/\lambda_j(W) < d^2/(\alpha^2 j^2)$ for all $j=1,\dotsc,d$.
Therefore, on $E_\alpha^\comp$:
\begin{align*}
    \tr(W^{-1}) = \sum_{j=1}^d \frac{1}{\lambda_j(W)} < \sum_{j=1}^d \frac{d^2}{\alpha^2 j^2} = \frac{d^2}{\alpha^2} \sum_{j=1}^d \frac{1}{j^2}.
\end{align*}
Since $\sum_{j=1}^d \frac{1}{j^2} \le \sum_{j=1}^\infty \frac{1}{j^2} = \frac{\uppi^2}{6}$, we have:
\begin{align*}
    \tr(W^{-1}) < \frac{\uppi^2 d^2}{6\alpha^2}.
\end{align*}
Setting $C_\delta = \frac{\uppi^2}{6\alpha^2}$ (which depends only on $\delta$ through the choice of $\alpha$), we obtain $\tr(W^{-1}) \le C_\delta d^2$ with probability at least $1-\delta$.
\end{proof}

\subsection{\texorpdfstring{Lower bound for inverse power trace estimation $\tr(W^{-q})$}{Lower bound for inverse power trace estimation tr(W-q)}}
We now present the proof of Theorem~\ref{thm:inv_power_trace_estimation}, which establishes a query lower bound for the estimation of $\text{tr}(W^{-q})$, valid for all $q > 1/2$.

\begin{proof}
\noindent
We start off by assuming, for the sake of contradiction, that $n \leq d/2$. Let $ > 0$ be the failure probability that we choose later.
\noindent
Using Proposition~\ref{prop:wishart_inverse_trace_shortened}, $\lambda_j(W^{-1}) \lesssim d^2/j^2$ with high probability for each $j=1, \dots, d$. Thus, $\lambda_j(W^{-p}) = (\lambda_j(W^{-1}))^p \lesssim (d^2/j^2)^p = d^{2p}/j^{2p}$ with high probability.
    Then, $\tr(W^{-p}) = \sum_{j=1}^d \lambda_j(W^{-p}) \lesssim \sum_{j=1}^d \frac{d^{2p}}{j^{2p}} = d^{2p} \sum_{j=1}^d j^{-2p}$.
\noindent
Constraining $p > 1/2 \implies 2p > 1 \implies \lim_{d \to \infty} \sum_{j=1}^d j^{-2p} = \zeta(2p)$, where $\zeta(s) = \sum_{n=1}^\infty n^{-s}$ is the Riemann zeta function. Therefore,
\begin{align*}
  \exists \text{ a constant } C' > 0 \text{ such that } \Pr\bigl\{\text{tr}(W^{-p}) \le C' d^{2p}\bigr\} \ge 1/2
\end{align*}
\noindent
We have a success event $E_\text{success} = \{C^{-1} \tr(W^{-1}) \le \widehat{\text{tr}} \le C\tr(W^{-1})\}$ with probability at least $1 - \delta$.
\noindent
Consider the probability that the estimator $\widehat{\text{tr}}$ is bounded:
    \begin{align*} \Pr\{\widehat{\text{tr}} \le C (C' d^{2p})\} &\ge \Pr\{\widehat{\text{tr}} \le C \tr(W^{-p}) \text{ and } \tr(W^{-p}) \le C' d^{2p}\} \\ &= \Pr\{E_{success} \text{ and } \tr(W^{-p}) \le C' d^{2p}\} \\ &\ge \Pr\{E_{success}\} + \Pr\{\tr(W^{-p}) \le C' d^{2p}\}\} - 1 \quad \\ &\ge (1-\delta) + \frac{1}{2} - 1 = \frac{1}{2} - \delta. \end{align*}
\noindent
Let $K \coloneqq C C'_p d^{2p}$. We have $\Pr\{\widehat{T}_p \le K\} \ge \frac{1}{2} - \delta$.

\noindent
Consider a failure event $E_\text{failure} = \{\widehat{\text{tr}} < C^{-1} \tr(W^{-p})\}$
where the estimate is too small.

\begin{align*} \Pr\{\widehat{T}_p < C^{-1} \tr(W^{-p})\} &\ge \Pr\{\widehat{T}_p \le K \text{ and } C^{-1} \tr(W^{-p}) > K\} \\ &= \Pr\{\widehat{T}_p \le K \text{ and } \tr(W^{-p}) > C K\}. \end{align*}
    Substituting $K = C C'_p d^{2p}$, the second condition is $\tr(W^{-p}) > C^2 C'_p d^{2p}$.
    Since $\tr(W^{-p}) = \sum \lambda_j(W^{-p}) \ge \lambda_{\max}(W^{-p}) = (\lambda_{\min}(W))^{-p}$, a sufficient condition for $\tr(W^{-p}) > C^2 C'_p d^{2p}$ is
    $$ (\lambda_{\min}(W))^{-p} > C^2 C'_p d^{2p} \implies \lambda_{\min}(W) < (C^2 C'_p d^{2p})^{-1/p} = (C^2 C'_p)^{-1/p} d^{-2} $$
\noindent
Note that $(C^2 C'_p)^{-1/p}$ is a positive constant.

Let $\mathcal{F}_n$ be the $\sigma$-algebra generated by the $n$ queries $v_1, \dots, v_n$ and responses $w_1=Wv_1, \dots, w_n=Wv_n$.
    Using Proposition~\ref{prop:wishart_posterior}, conditionally on $\mathcal{F}_n$, $W$ can be related to a posterior Wishart matrix $\widetilde{W} \sim \Wishart(d-n)$. Proposition~\ref{prop:wishart_smaller_than_posterior} states $\lambda_{\min}(W) \le \lambda_{\min}(\widetilde{W})$.
    \noindent
    \begin{align*} \therefore \Pr\{\widehat{\text{tr}} < C^{-1} \tr(W^{-p})\} &\ge \Pr\{\widehat{\text{tr}} \le K \text{ and } \lambda_{\min}(W) < (C^2 C'_p)^{-1/p} d^{-2}\} \\ &\ge \Pr\{\widehat{\text{tr}} \le K \text{ and } \lambda_{\min}(\widetilde{W}) < (C^2 C'_p)^{-1/p} d^{-2}\} \\ &= \E\left[ \mathbf{1}_{\{\widehat{\text{tr}} \le K\}} \cdot \Pr\{\lambda_{\min}(\widetilde{W}) < (C^2 C'_p)^{-1/p} d^{-2} \mid \mathcal{F}_n\} \right]. \end{align*}
    Consider a Wishart matrix $\widetilde{W} \sim \Wishart(d')$ where $d' = d-n$. Since we assumed $n \le d/2$, we have $d/2 \le d' \le d$, so $d' \asymp d$. Proposition~\ref{prop:wishart_smallest_eig} states that,
    $$ \Pr\{\lambda_{\min}(\widetilde{W}) \le x / (d')^2\} \asymp \sqrt{x} $$
    Let $x' / (d')^2 = (C^2 C'_p)^{-1/p} d^{-2}$.
    Then $x' = (C^2 C'_p)^{-1/p} (d'/d)^2$. Since $(C^2 C'_p)^{-1/p}$ is a constant and $(d'/d)^2 \asymp 1$, $x'$ is a positive constant.
    So, $\Pr\{\lambda_{\min}(\widetilde{W}) < (C^2 C'_p)^{-1/p} d^{-2} \mid \mathcal{F}_n\} = \Pr\{\lambda_{\min}(\widetilde{W}) \le x' / (d')^2 \mid \mathcal{F}_n\} \asymp \sqrt{x_0} \asymp (C^2 C'_p)^{-1/2p}$.
    Let $\epsilon$ be a positive constant (depending only on $C, p$) such that $\Pr\{\lambda_{\min}(\widetilde{W}) < (C^2 C'_p)^{-1/p} d^{-2} \mid \mathcal{F}_n\} \ge \epsilon$. This $\epsilon$ is proportional to $(C^2 C'_p)^{-1/2p}$.
    \begin{align*}\therefore \Pr\{\widehat{\text{tr}} < C^{-1} \tr(W^{-p})\} &\ge \E\left[ \mathbf{1}_{\{\widehat{\text{tr}} \le K\}} \cdot \epsilon \right] \\ &= \epsilon \Pr\{\widehat{\text{tr}} \le K\} \\ &\ge \epsilon \left(\frac{1}{2} - \delta\right). \end{align*}
    The algorithm is assumed to succeed with probability at least $1-\delta$, so the total failure probability is at most $\delta$. Thus, the probability of this specific failure mode must also be $\le \delta$:
    $$ \Pr\{\widehat{\text{tr}} < C^{-1} \tr(W^{-p})\} \le \delta. $$
    Combining these, if the algorithm works as specified and $n \le d/2$, then it must hold that
    $$ \epsilon \left(\frac{1}{2} - \delta\right) \le \delta \implies \delta \ge \frac{\epsilon}{2(1 + \epsilon)}. $$

    \noindent
     Let $\delta_0 \coloneqq \frac{\epsilon}{2(1 + \epsilon)}$. Since $\epsilon$ is a positive constant (depending only on $C$ and $p$), $\delta_0$ is also a positive constant depending only on $C$ and $p$. $\therefore$ if $n \le d/2$, the algorithm can only satisfy its success guarantee if its failure probability $\delta$ is at least $\delta_0$.
     
     However, the theorem statement allows for choosing any $\delta$ such that $0 < \delta < \delta_0$. If we choose such a $\delta$ (e.g., $\delta = \delta_0/2$), then the condition $\delta \ge \delta_0$ is violated. This is a contradiction.
     
     In conclusion, the assumption $n \le d/2$ must be false. Therefore, any algorithm meeting the specified accuracy and confidence (for a sufficiently small $\delta < \delta_0$) must use $n > d/2$ queries. This implies $n = \Omega(d)$.
\end{proof}

\section*{Acknowledgments}
The author thanks Professor
Christopher Musco for helpful conversations and mentorship. The study is done as part of the Recent Developments in Algorithm Design course at New York University.

\printbibliography

\appendix

\end{document}